\newcommand{\R}{\mathbb{R}}
\newcommand{\pa}{\partial}
\newcommand{\ve}{\varepsilon}
\newcommand{\res}{\rho^*} 
\newcommand{\eee}{equation}
\newcommand{\be}{\begin{\eee}}
\newcommand{\ee}{\end{\eee}} 
\newcommand{\rmd}{{\rm d}}
\let\div\relax 
\DeclareMathOperator{\curl}{curl}
\DeclareMathOperator{\div}{div}
\numberwithin{equation}{section}
\newtheorem{theorem}{Theorem}
\numberwithin{theorem}{section}
\newtheorem{prop}{Proposition}
\numberwithin{prop}{section}
\newtheorem{cor}{Corollary}
\numberwithin{cor}{section}
\newtheorem{lemma}{Lemma}
\numberwithin{lemma}{section}
\theoremstyle{remark}
\newtheorem*{remark}{Remark}
\theoremstyle{definition}
\newtheorem{defi}{Definition}
\begin{document}

\title{On the distribution of heat in fibered magnetic fields}

\author{Theodore D. Drivas}
\address{Department of Mathematics, Stony Brook University,
Stony Brook, NY, 11794}
\email{tdrivas@math.stonybrook.edu}
\author{Daniel Ginsberg}
\address{Department of Mathematics, Princeton University, Princeton, NJ 08544}
\email{ dg42@princeton.edu}
\author{Hezekiah Grayer II}
\address{Program in Applied and Computational Mathematics, Princeton University, Princeton, NJ 08544}
\email{hgrayer@math.princeton.edu}
\maketitle

\begin{abstract}
We study the equilibrium temperature distribution in a model for strongly magnetized plasmas in dimension two and higher.  Provided the magnetic field is sufficiently structured (integrable in the sense that it is fibered by co-dimension one invariant tori, on most of which the field lines ergodically wander) and the effective thermal diffusivity transverse to the tori is small, it is proved   that the temperature distribution is well approximated by a function that only varies across the invariant surfaces.
The same result holds for  ``nearly integrable" magnetic fields up to a  ``critical" size.  In this case, a volume of non-integrability is defined in terms of the temperature defect distribution and related the non-integrable structure of the magnetic field, confirming a physical conjecture of Paul--Hudson--Helander \cite{PHH21}.  Our proof crucially uses a certain quantitative ergodicity condition for the magnetic field lines on full measure set of invariant tori, which is automatic in two dimensions for magnetic fields without null points and, in higher dimensions, is guaranteed by a Diophantine condition on the rotational transform of the magnetic field.
\end{abstract}

\section{Introduction}
The heat conduction in strongly magnetized plasmas is influenced locally by the direction of
the magnetic field $B : \R^{d} \to \R^{d}.$ Braginskii \cite{Braginskii}  (see also \cite{HB,HelanderSigmar}) derived an effective anisotropic diffusion equation for the temperature $T$ in such an environment which, in steady state and free of heat sources, reads
	\begin{alignat}{2}
	 \div (b \nabla_b T + \ve \nabla_b^\perp T) &=0
	 \label{temp0}
	\end{alignat}
	where, assuming the magnetic field has no null points $|B|\neq 0$, we introduced
	\begin{equation}
b = \tfrac{B}{|B|}\qquad  \nabla_b =b\cdot \nabla,
 \qquad
 \nabla^\perp_{b} = \nabla - b\nabla_{b}.
 \label{}
\end{equation}
This equilibrium equation 
captures macroscopically the phenomenon that charged particle dynamics strongly influenced by $B$ favors collisions aligned with $b$.
In \eqref{temp0}, the parameter $\ve>0$ represents the ratio  $\kappa_{\perp} /\kappa_\|$ of the transverse diffusion coefficient to the longitudinal.   In general it is a scalar  function of local density and field magnitude $|B|$, however its magnitude is small in many applications of interest where $|B|$ is large.  In our work, we treat $\ve$ as a constant and study the limit $\ve\to 0$.

  For arbitrary $B$, it is not immediate what emerges in the limit
	$\ve \to 0$ of \eqref{temp0}, given some fixed boundary conditions. We focus on toroidal ``Arnold fibered''
fields $B$. These are solenoidal vector fields  $B$ having the property that there is a smooth
function $\psi: D \to \mathbb{R}$ defined in a bounded region $D \subset \mathbb{R}^d$ with $|\nabla \psi|\not=0$ in $D$,
 whose level sets $S_\psi$ are $(d-1)$--dimensional tori such that $\psi$ is a first integral
 \begin{equation}
  B\cdot \nabla \psi = 0.
  \label{fibered}
 \end{equation}
 We shall term these fields \emph{(toroidally) fibered}. 
	In two dimensions,
	if $B$ is divergence-free and sufficiently smooth, then $B = \nabla^\perp A$ for a  ``streamfunction'' $A: D\to \mathbb{R}$ where $\nabla^\perp = (-\pa_y, \pa_x)$. If
	$B$ has no nulls, then $|\nabla A| > 0$, so any
	non-vanishing divergence-free field in two dimensions is fibered by its
	streamfunction, e.g. $\psi=A$.  See Figure \ref{fig:chan} (a).  In three dimensions, its straightforward to write down explicit fibered fields, see \eqref{Bformmix} and Figure \ref{fig:chan} (b).  Moreover, as we will later discuss, non-degenerate  magnetohydrostatic equilbria have this property.
	\begin{figure}[h!]
\centering
\begin{tikzpicture}[scale=1.25, every node/.style={transform shape}]

		 \draw [name path=A,thick] (5.5,0)--(10.5,0);
		  \draw [name path=B,thick] (5.5,3)--(10.5,3);

	\draw [thin,   decoration={markings, mark = at position -0.5 with {\arrow{>}}},
        postaction={decorate}]  plot [smooth, tension=1]  coordinates {(5.5,2.5) (6.8,2.4) (9,2.6)   (10.5,2.5)};
	\draw [thin,   decoration={markings, mark = at position -0.5 with {\arrow{>}}},
        postaction={decorate}]  plot [smooth, tension=1]  coordinates {(5.5,2) (7,1.9) (9,2.15)   (10.5,2)};
	\draw [thin,   decoration={markings, mark = at position -0.5 with {\arrow{>}}},
        postaction={decorate}]  plot [smooth, tension=1]  coordinates {(5.5,1.5)  (7,1.49) (9,1.52)  (10.5,1.5)};
	\draw [thin,   decoration={markings, mark = at position -0.5 with {\arrow{>}}},
        postaction={decorate}] plot [smooth, tension=1]  coordinates {(5.5,1) (6.4,1.07) (8.4,0.93) (10.5,1)};
	\draw [thin,   decoration={markings, mark = at position -0.5 with {\arrow{>}}},
        postaction={decorate}]  plot [smooth, tension=1]  coordinates {(5.5,0.5) (6.3,0.68) (8.7,0.35) (10.5,0.5)};

        \draw [dashed] (5.5,0)--(5.5,3);
        \draw [dashed] (10.5,0)--(10.5,3);
\tikzfillbetween[of=A and B]{black!30!white, opacity=0.4};
\end{tikzpicture}
  \qquad  \includegraphics[width=2.5in]{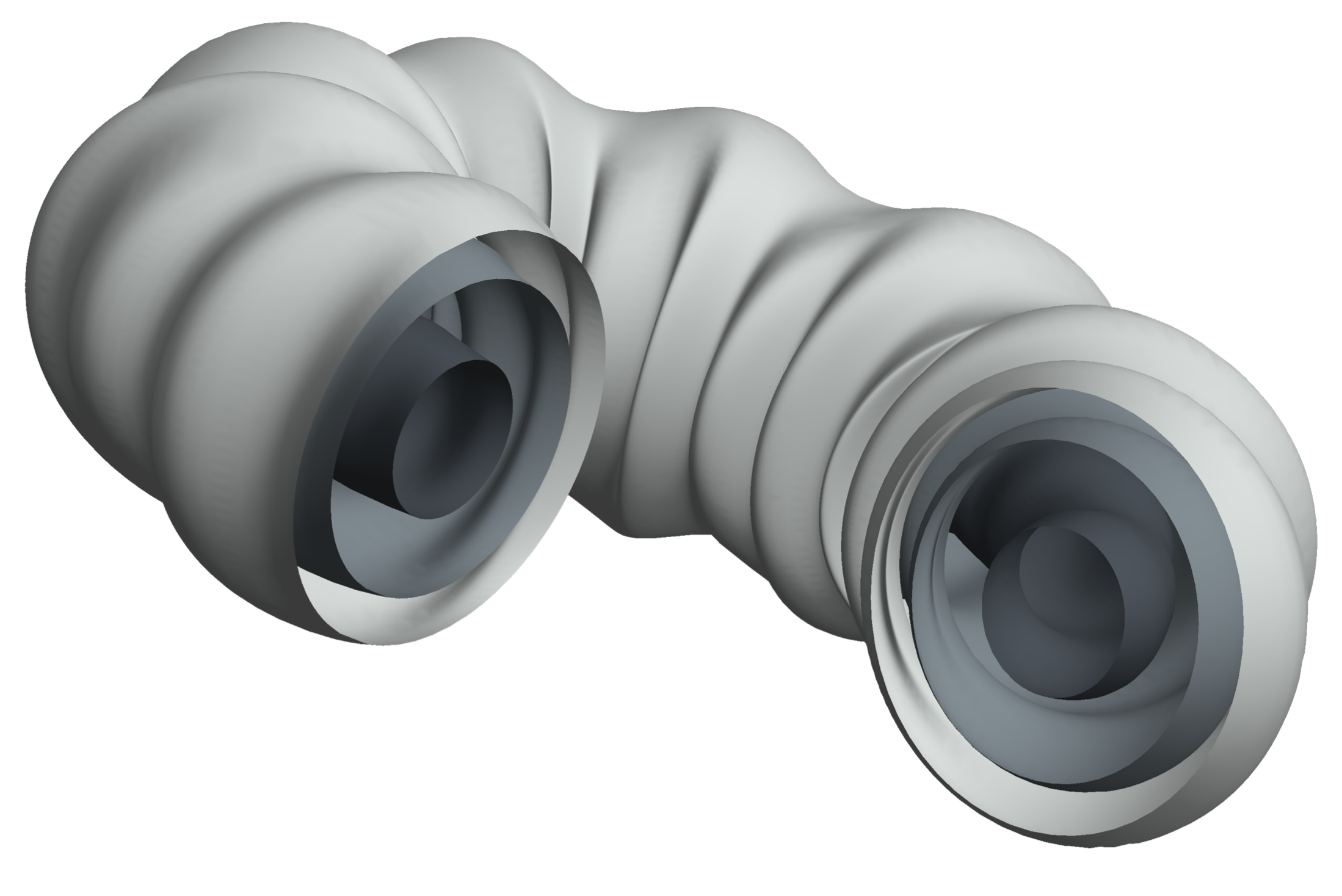}
  \caption{Examples of fibered magnetic fields. Left: a 2d magnetic field without null points on a (topologically) annular domain -- the periodic channel.  Integral curves are levels of the streamfunction.   Right: a 3d toroidal magnetic field; depicted in grayscale are distinct level surfaces of the first integral, $\psi$, the flux function.}
  \label{fig:chan}
\end{figure}

	The temperature equation \eqref{temp0} is to be solved in a  toroidal shell  $D$ with boundaries $S_{\pm}$ that are level sets of the first integral $\psi$.  Call $ \psi_- :=\inf_D \psi$ and $ \psi_+ := \sup_D \psi$. Since $\psi$ is non-degenerate by assumption, $S_{\pm}$ are the levels corresponding to the values $\psi_\pm$. To complete the problem, we  impose Dirichlet boundary conditions
for the temperature field $T:D\to
\R$ on these surfaces. Overall, we consider the system
	\begin{equation}
	\label{ADE}
	\begin{aligned}
	 \div (b \nabla_b T + \ve \nabla_b^\perp T) &=0 &&\qquad \text{ in } D,\\
	 T &=T_{\pm}, &&\qquad \text{ on } S_{\pm},
	 \end{aligned}
	\end{equation}
  for constants $T_{-},T_+$.  The \ref{ADE} system is used in practice as an efficient method to visualize the flux surfaces of the magnetic field \cite{HB,PHH21A,PHH21}.

To state our main result concerning the convergence of $T_{\ve}$, we use some notions from mixing to characterize the behavior of $B$ via its trajectories on the flux surfaces $S_\psi$. Denoting  $I = \lbrack \psi_-, \psi_+\rbrack$, we say that $S_\psi$ is an ``ergodic'' surface if $\psi$ is in the set
\begin{equation}
 E(\gamma,M) := \left\{ \psi \in I : \| u\|_{\dot{H}^{-\gamma}(S_{\psi})}
 \leq M \|\nabla_B u\|_{L^2(S_{\psi})}, \text{ for all } u \in H^1(S_{\psi})\right\} 
 \label{mixingset}
\end{equation}
for some nonnegative $\gamma$ and $M$, where $\dot{H}^{\gamma}(S_{\psi})$ denotes the homogeneous Sobolev space of index $\gamma$ on $S_\psi$.
The sets $E(\gamma, M)$ of ergodic values may be empty or may have full measure, depending on
$B$. The definition of these sets is motivated by a
Diophantine condition, see \eqref{diophantine}.
We then define the collection
\begin{equation}
	N(\gamma, M) = I \setminus E(\gamma, M),
 \label{nonmixingset}
\end{equation}
of ``non-ergodic'' values of $\psi$. Note that if $M > M'$ then $N(\gamma, M) \subseteq N(\gamma, M')$.
\begin{defi}
	\label{ergcond}
	We say that $B$ satisfies the ``ergodicity condition'' if, with
	$N(\gamma, M)$ defined as in \eqref{nonmixingset},
	for some
	$c, \gamma > 0$, we have
	\begin{equation}
	\lim_{M \to \infty} M^c  \mu(N(\gamma, M)) = 0,
	\label{measureergo}
	\end{equation}
	where $\mu$ denotes the one-dimensional Lebesgue measure.
\end{defi}

Our main result below roughly states that, provided $B$ is ergodic on almost all of the surfaces $S_\psi$ such that the ergodicity condition holds, the temperatures profiles $T_\ve$ indeed converge (in $H^1(D)$) to the effective temperature $T_0$. A consequence of our theorem is that the limiting temperature profile $T_0$ itself fibers $B$.  This fact partially motivated the work of Paul--Hudson--Helander \cite{PHH21}.
\begin{theorem}
	\label{mainmixthm}
	Let $d \geq 2$ and let $B$ be toroidally fibered by $\psi$, and let  $D$ be the region bounded by two level sets $S_{\pm}$. For $\ve > 0$, let  $T_\ve:D\to \mathbb{R}$ be the solution of system (\ref{ADE}) for constants $T_{-}$ and $T_+$. If the ergodicity condition from Definition \ref{ergcond} holds,   then 	
\begin{equation}
T_\ve \to T_0:= \Theta(\psi) \qquad \text{ in } H^1(D) 
\end{equation}
where $\Theta(\psi)$ is the solution of the one-dimensional boundary-value problem on $\psi \in \lbrack\psi_- , \psi_+\rbrack$:
	\begin{align}
	 \frac{\rmd}{\rmd\psi}\left( \frac{\rmd \Theta}{\rmd\psi}\int_{S_\psi} |\nabla \psi|\, \rmd \mathscr{H}^{(d-1)} \right) &=0, \qquad \Theta(\psi_{\pm}) = T_{\pm}\label{effectivebc}.
	\end{align}
In fact,  there is $C:= C(D,B)>0$ such that
\be
\label{convrate}
\| T_\ve-T_0\|_{H^1(D)} \leq C \ve^{\frac{c}{2+c}},
\ee
where $c$ is the largest so that there is a $\gamma>0$ making condition \eqref{measureergo}  of  Definition \ref{ergcond} hold.
\end{theorem}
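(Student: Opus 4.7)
The plan is to combine an energy identity for the degenerate equation with the quantitative ergodicity condition, balancing a cutoff $M_*$ against the diffusivity $\ve$ to extract a rate. First, observe that for each $\ve>0$ the flux can be rewritten as $(1-\ve)b\nabla_b T+\ve\nabla T$, which makes \eqref{ADE} a uniformly elliptic divergence-form problem; standard variational theory yields a unique $T_\ve\in H^1(D)$, and the maximum principle confines $T_\ve\in[T_-,T_+]$ uniformly in $\ve$.

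Next, set $u_\ve:=T_\ve-\Theta(\psi)$, which vanishes on $\partial D$. Testing \eqref{ADE} against $u_\ve$ and exploiting $b\cdot\nabla\psi=0$ to kill all parallel cross-terms yields the identity
\[
 (1-\ve)\|\nabla_b u_\ve\|_{L^2(D)}^2 + \ve\|\nabla u_\ve\|_{L^2(D)}^2 \;=\; \ve\!\int_D u_\ve\,\Delta\bigl(\Theta(\psi)\bigr)\,\rmd x.
\]
The structural observation driving the whole argument is that, via the coarea formula and the effective ODE \eqref{effectivebc} (equivalently $(G\Theta')'(\psi)=0$ with $G(\psi):=\int_{S_\psi}|\nabla\psi|\,\rmd\mathscr{H}^{(d-1)}$), the function $\Delta(\Theta(\psi))$ has vanishing $|\nabla\psi|^{-1}$-weighted mean on every $S_\psi$. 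This surface-wise orthogonality lets me replace $u_\ve$ on the right-hand side by the fibre deviation $v_\ve:=T_\ve-\bar T_\ve(\psi)$, with $\bar T_\ve$ the corresponding weighted average, and Cauchy--Schwarz sliced by $\psi$ produces $(1-\ve)\|\nabla_b u_\ve\|^2+\ve\|\nabla u_\ve\|^2\le C\ve\|v_\ve\|_{L^2(D)}$.

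Third, I would bound $\|v_\ve\|_{L^2(D)}$ using the ergodicity condition surface-by-surface. For $\psi\in E(\gamma,M_*)$, combining $\|v_\ve\|_{\dot H^{-\gamma}(S_\psi)}\le M_*\|\nabla_B v_\ve\|_{L^2(S_\psi)}$ with the standard interpolation $\|v_\ve\|_{L^2}^{1+\gamma}\le\|v_\ve\|_{\dot H^{-\gamma}}\|v_\ve\|_{\dot H^1}^{\gamma}$ yields a surface Poincar\'e-type inequality. Integrating in $\psi$ via H\"older, and using $\|\nabla_B v_\ve\|_{L^2(D)}^2=\|\nabla_B T_\ve\|_{L^2(D)}^2=O(\ve)$ from the energy identity, produces an $O(M_*^{2/(1+\gamma)}\ve^{1/(1+\gamma)})$ contribution from the ergodic layers. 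On the bad set $\psi\in N(\gamma,M_*)$, the uniform $L^\infty$ bound together with $\mu(N(\gamma,M_*))\le CM_*^{-c}$ gives an $O(M_*^{-c})$ residual, and overall
\[
  \|v_\ve\|_{L^2(D)}^2 \;\le\; C\bigl(M_*^{2/(1+\gamma)}\ve^{1/(1+\gamma)}+M_*^{-c}\bigr),
\]
which is balanced by $M_*\sim\ve^{-1/(2+c)}$ (the sharp scale arising as $\gamma\downarrow 0$).

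The main obstacle is closing the self-improving loop: the energy identity controls $\|\nabla u_\ve\|_{L^2}$ only through $\|v_\ve\|_{L^2}$, while the ergodicity estimate for $\|v_\ve\|_{L^2}$ itself depends on $\|\nabla_B v_\ve\|_{L^2}$, which is controlled by the same energy identity. A careful iteration of the two inequalities, together with optimization in $M_*$ and $\gamma$, delivers the sharp $H^1$-rate $\ve^{c/(2+c)}$ claimed in \eqref{convrate}. To identify the limiting profile, one tests \eqref{ADE} against arbitrary $\phi(\psi)$ and uses the coarea formula: the weighted normal flux $G(\psi)\bar T_\ve'(\psi)$ is constant in $\psi$ up to errors controlled by $\|v_\ve\|_{L^2}$, so $\bar T_\ve\to\Theta$ as $\ve\to 0$; combining this with $v_\ve\to 0$ gives the full convergence $T_\ve\to\Theta(\psi)$ in $H^1(D)$ at the stated rate.
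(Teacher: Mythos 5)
Your overall architecture matches the paper's: the same energy identity obtained by testing against $T_\ve-\Theta(\psi)$, the same identification of $\Theta$ through the weighted compatibility condition $\int_{S_\psi}\Delta T_0\,|\nabla\psi|^{-1}\,\rmd\mathscr{H}^{(d-1)}=0$, the same split of $[\psi_-,\psi_+]$ into ergodic and non-ergodic values, and the same balancing of a cutoff $M_*$ against $\ve$. The step that genuinely differs --- and where the argument does not close --- is how the ergodicity condition enters the error term $\ve\int_D\Delta T_0\,u_\ve$. You bound it by plain Cauchy--Schwarz as $C\ve\|v_\ve\|_{L^2}$ and then estimate $\|v_\ve\|_{L^2(S_\psi)}$ by interpolating between $\dot H^{-\gamma}(S_\psi)$ and $\dot H^{1}(S_\psi)$. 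This injects $\gamma$ into the final exponent: after optimizing $M_*$ your scheme yields $\|v_\ve\|_{L^2}^2\lesssim\ve^{c/(2+c(1+\gamma))}$, not $\ve^{c/(2+c)}$, and $\gamma$ cannot be sent to $0$ because it is fixed by the hypothesis (in the three-dimensional application one needs $\gamma>2$, so the loss is substantial). Feeding this back through the energy identity gives only $\|\nabla u_\ve\|_{L^2}^2\lesssim\|v_\ve\|_{L^2}$, costing a further square root. The ``careful iteration of the two inequalities'' you invoke to recover the sharp rate is exactly the missing idea: iterating these two bounds reproduces the same exponent rather than improving it.

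The paper avoids both losses with a duality argument (Lemma \ref{productest}, estimate \eqref{productinftymix}): on each ergodic surface one pairs the Fourier coefficients of $G=\Delta T_0\,|h|^{1/2}/|\nabla\psi|$ in $H^{\gamma}(S_\psi)$ against those of $\rho$ in $\dot H^{-\gamma}(S_\psi)$, so the $\gamma$ derivatives land on the known smooth datum $\Delta T_0$ (harmless, since $\|\Delta T_0\|_{H^{(0,\gamma)}}$ is finite) while the ergodicity condition converts $\|\rho\|_{\dot H^{-\gamma}(S_\psi)}$ directly into $M\|\nabla_B\rho\|_{L^2(S_\psi)}$. The resulting bound $M\ve\|\Delta T_0\|_{H^{(0,\gamma)}}\|\nabla_b\rho\|_{L^2}+\ve\,\mu(N(\gamma,M))^{1/2}\|\Delta T_0\|_{L^\infty}\|\rho\|_{L^2}$ is absorbed in a single application of Young and Poincar\'e into the left-hand side, whose parallel term carries coefficient $1$ rather than $\ve$; this is what produces the cost $M^2\ve^2+\ve\,\mu(N(\gamma,M))$ and hence the $\gamma$-independent exponent after taking $M=\ve^{-1/(2+c)}$. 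Two secondary points: your interpolation inequality requires $v_\ve$ to have no $k=0$ Fourier mode on $S_\psi$, whereas the average you subtract is the $|\nabla\psi|^{-1}$-weighted one, a mismatch that is repairable but left unaddressed; and the closing step testing against $\phi(\psi)$ to show $\bar T_\ve\to\Theta$ is redundant once one estimates $T_\ve-\Theta(\psi)$ directly, which is all the paper does.
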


The proof is found in \S \ref{secmainmixthm}. Briefly, if each integral curve of $B|_{S_\psi}$ covers $S_\psi$
densely for some $\psi$, (that is, if $S_\psi$ is an ``irrational torus''),
then
$B$ ``nearly'' spans the tangent space at each point. On such a torus,
one encounters a small divisors problem; the operator $\nabla_B$ is bounded below
on $S_\psi$ but the lower bound may be arbitrarily small.
However, for $\psi \in E(\gamma, M$),
this lower bound cannot be less than $1 /M$.
On the complement $N(\gamma, M)$, the operator $\nabla_B$
is not bounded below,
 but the ergodicity condition
\eqref{measureergo} ensures that the measures of the sets
$N(\gamma, M)$ go to zero as $M$ increases.  The net result is one of homogenization to a one-dimensional limit profile adapted to the geometry of the invariant tori that satisfies an effective diffusion equation.  See \S \ref{secmainmixthm} for further discussion.

In the upcoming Corollaries \ref{2dcor}, \ref{3dcor},
we show that this condition holds for a large family of physically-relevant
vector fields $B$. Whenever $d = 2$, the sets $N(\gamma, M)$ are empty for large enough $M$; that is, every surface $S_\psi$ is ergodic in this setting (in three and higher-dimensions, the ergodicity condition need not be true in general).  Thus $c$ in bound \eqref{convrate} may be taken to $\infty$ for any $\gamma \geq 0$. It follows from our
main theorem that, in this case, we have convergence of $T_\ve$ to the
effective temperature $T_0$. More quantitatively:
\begin{cor}
	\label{2dcor}
	Let $d = 2$ and let $B$ be a non-vanishing divergence-free vector
	field. Then 
\be
\| T_\ve-T_0\|_{H^1(D)} \leq C \ve,
\ee
 where $T_0 = \Theta(\psi)$ where $\Theta$ is given by \eqref{limitTheta}.
\end{cor}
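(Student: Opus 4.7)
The strategy is to verify the ergodicity condition from Definition \ref{ergcond} in the strongest possible uniform form in the two-dimensional setting and then pass this uniformity through Theorem \ref{mainmixthm} to produce the $\mathcal{O}(\ve)$ rate.

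First I would observe that when $d = 2$ each flux surface $S_\psi$ is a smoothly embedded circle on which $B$ restricts to a non-vanishing tangent vector field (since $B$ is tangent to level sets of $\psi$ and has no nulls). In an arc-length parametrization $s$ of $S_\psi$, one has $\nabla_B u = |B|\, \pa_s u$, so the classical Poincaré inequality on the circle gives, for every $u \in H^1(S_\psi)$,
\begin{equation*}
\inf_{c \in \R} \|u - c\|_{L^2(S_\psi)} \;\leq\; \frac{L(\psi)}{2\pi}\, \|\pa_s u\|_{L^2(S_\psi)} \;\leq\; \frac{L(\psi)}{2\pi \min_{S_\psi}|B|}\, \|\nabla_B u\|_{L^2(S_\psi)},
\end{equation*}
where $L(\psi) := \mathscr{H}^1(S_\psi)$. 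Because $B$ is smooth and non-vanishing on the compact set $\overline{D}$ and the foliation $\{S_\psi\}_{\psi \in I}$ has $|\nabla \psi|$ bounded away from zero, both $L(\psi)$ and $\min_{S_\psi}|B|$ are controlled uniformly in $\psi$. Taking $\gamma = 0$ (so $\dot{H}^{0}$ is $L^2$ modulo constants), this produces a constant $M_0 = M_0(D,B)$ with $E(0,M_0) = I$; equivalently, $N(0,M) = \emptyset$ for all $M \geq M_0$, and the limit in \eqref{measureergo} vanishes trivially for every $c > 0$.

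Next, I would apply Theorem \ref{mainmixthm} with $\gamma = 0$. Since the ergodicity hypothesis is satisfied with arbitrarily large exponent, a direct appeal yields $\|T_\ve - T_0\|_{H^1(D)} \leq C(c)\,\ve^{c/(2+c)}$ for each $c > 0$. To obtain the clean rate $\ve$ without having to track a possible blowup of $C(c)$ as $c \to \infty$, I would instead redo the key estimate of Theorem \ref{mainmixthm} in this simplified setting: the standard energy identity applied to $T_\ve - \Theta(\psi)$ produces
\begin{equation*}
\|\nabla_B (T_\ve - \Theta(\psi))\|_{L^2(D)}^2 + \ve \|\nabla_b^\perp T_\ve\|_{L^2(D)}^2 \leq C \ve,
\end{equation*}
and the uniform slicewise Poincaré inequality, combined with the coarea formula with $|\nabla \psi|$ pinched between positive constants, converts the $\nabla_B$-control into $L^2$-control of $T_\ve - \Theta(\psi)$ with no further loss of powers of $\ve$. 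Reassembling $\nabla = \nabla_B + \nabla_b^\perp$ then yields the full $H^1$-bound at rate $\ve$.

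The main obstacle is precisely this last step: making sure the small-divisor loss that produces the exponent $c/(2+c)$ in the general theorem does not degrade the constant in the two-dimensional application. The resolution is that in $d = 2$ the ``bad set'' $N(\gamma, M)$ is empty, so the delicate tail balance in the abstract proof is replaced by a single uniform Poincaré estimate on the circle — exactly the feature that allows the homogenization rate to attain the diffusive scaling $\ve$ rather than the general $\ve^{c/(2+c)}$.
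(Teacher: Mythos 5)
Your first step is exactly the paper's: in $d=2$ each $S_\psi$ is a circle on which $B$ is a non-vanishing tangent field, so the Wirtinger/Poincar\'e inequality (the paper phrases it through Fourier series, $\sum_{k\neq 0}|\widehat u(k)|^2\le \sum_k |k|^2|\widehat u(k)|^2 \le C(\inf|B|)^{-2}\|\nabla_B u\|_{L^2}^2$) gives $E(0,M_0)=I$ for a single $M_0=M_0(D,B)$, hence $N(0,M)=\emptyset$ and the ergodicity condition holds for every $c$. That part is correct and identical in substance. Your instinct to avoid sending $c\to\infty$ in the black-box rate $\ve^{c/(2+c)}$ and instead rerun the energy estimate with the fixed $M_0$ is also the right move (and is more careful than the paper's one-line ``the result follows''): plugging $\mu(N)=0$, $M=M_0$ into Proposition \ref{rhoest} gives $\|\nabla_b\rho\|_{L^2}^2+\ve\|\nabla_b^\perp\rho\|_{L^2}^2+\ve\|\rho\|_{L^2}^2\le CM_0^2\ve^2$.

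The gap is in your last step. First, your displayed energy inequality is off by a power: the right-hand side is $C\ve^2$, not $C\ve$ (the source term is $\ve\Delta T_0$ tested against $\rho$, and the whole point of the mean-zero structure \eqref{meanfree} plus the slicewise Poincar\'e is to absorb $\|\nabla_b\rho\|$ and gain the extra factor of $\ve$). Second, and more seriously, the conclusion ``$H^1$ at rate $\ve$'' does not follow from this energy identity by the argument you describe. The transverse gradient enters the energy only as $\ve\|\nabla_b^\perp\rho\|_{L^2}^2$, so even the sharp bound $\le C\ve^2$ yields only $\|\nabla_b^\perp\rho\|_{L^2}\le C\ve^{1/2}$; reassembling $\nabla=b\nabla_b+\nabla_b^\perp$ therefore gives $\|\rho\|_{H^1}\le C\ve^{1/2}$, not $C\ve$. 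Likewise, the slicewise Poincar\'e inequality controls only $\rho$ minus its fiber average $\int_{S_\psi}\rho$, and that average is invisible to $\nabla_B$; the $L^2$ control of $\rho$ itself comes from the global Poincar\'e inequality via $\nabla\rho$, which again costs the $\ve^{1/2}$ from the transverse term. So the quantities you actually control at rate $\ve$ are $\|\nabla_b\rho\|_{L^2}$ and the \emph{squares} $\|\nabla_b^\perp\rho\|_{L^2}^2$, $\|\rho\|_{L^2}^2$ (this is what the $c\to\infty$ limit of the theorem's displayed bounds \eqref{bounds1}--\eqref{bounds2} gives); your proposed route does not close the full unsquared $H^1$ bound at rate $\ve$, and you would need to either adopt that convention or supply a genuinely new argument for the transverse component.
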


 In three dimensions, an important example of fibered fields are the
	smooth solutions of the magnetohydrostatic equations
 \begin{equation}
  (\curl B) \times B = \nabla p,
	\qquad \div B = 0,\qquad \text{ in } 	D \subset \mathbb{R}^3,
  \label{mhd}
 \end{equation}
	having the property that the pressure satisfies $\nabla p \not= 0$. As noted by Arnold   \cite{Arnold, AK}   since $|\nabla p|$ is nonvanishing by assumption,
	each surface $S_p$ is a smooth two-dimensional surface which admits two everywhere transverse non-vanishing tangent vector fields ($\curl B$ and $B$) and are thus two-dimensional tori or cylinders.  In this setting $B$ is fibered by its pressure, $\psi = p$.
	 It is straightforward to construct fields $B$ of this
	type which are axisymmetric, see e.g. \cite{Friedberg87} and
	\cite{Grad67}.
	It is an open problem (see \cite{Grad67}, \cite{Grad85})
	to construct such smooth magnetohydrostatic equilibria with $|\nabla p| > 0$
outside of Euclidean symmetry. 

	More generally, in three dimensions, given a  non-degenerate function $\psi:D\to \mathbb{R}$ whose level sets are tori along with functions $\theta, \phi :D \to \mathbb{R}$, any vector field of the form
	\begin{equation}
	 B = \nabla \psi \times \nabla \theta + \nabla \phi \times \nabla \chi
	 \label{Bformmix}
	\end{equation}
	is divergence-free. If $\chi$ is chosen so that $\chi = \chi(\psi, \phi)$,
	$B$ is fibered by $\psi$, and  known as ``integrable''
because the integral curves of $B$ obey a Hamiltonian system
with Hamiltonian $\chi$, and this Hamiltonian is integrable in the usual sense\footnote{
We suppose that with $B$ as in \eqref{Bformmix}, the functions
$\theta, \phi, \psi$ together form a coordinate system in $D$. Then for any
smooth
$u:D \to \mathbb{R}$,
we have $
 \nabla u = \pa_\psi u \nabla \psi + \pa_\phi u \nabla \phi + \pa_\theta u \nabla \theta
$
and so, writing
$
 J = \nabla \psi \times \nabla \theta \cdot \nabla \phi,$
which is nonvanishing by our assumption, we have the formula
\begin{equation}
 (B\cdot \nabla) u = \left[ \pa_\phi u + \iota(\psi, \theta, \phi) \pa_\theta u
 + \tau(\psi, \theta, \phi) \pa_\psi u\right]J,
 \label{Boperator}
\end{equation}
where $ \tau(\psi, \theta, \phi) := -\pa_\theta \chi(\psi, \theta, \phi)$ and where we have introduced the rotational transform $
 \iota(\psi, \theta, \phi) := \pa_\psi \chi(\psi, \theta, \phi).
$
There is a simple interpretation of the function $\chi$.
Consider any integral curve of $B$, parametrized
by $\phi$. That is, we consider $\Psi(\phi), \vartheta(\phi)$ defined by
\begin{align}
 \frac{\rmd }{\rmd \phi} \Psi &=  \frac{B\cdot \nabla\psi }{B\cdot \nabla \phi}
 = -\pa_\theta \chi,\label{ham1}\qquad \frac{\rmd}{\rmd \phi} \vartheta =\frac{B\cdot \nabla \theta}{B\cdot \nabla \phi}
 = \pa_\psi \chi,
\end{align}
with the understanding that the quantities on the right-hand sides are
evaluated at $(\psi, \theta, \phi) = (\Psi(\phi), \vartheta(\phi), \phi)$.
Thus the integral curves of $B$ satisfy a Hamiltonian system with
Hamiltonian $\chi$. Note that if $\pa_\theta \chi = 0$,
the above system is integrable (has a conserved quantity)
since $\psi$ is constant along the flow.
This also be seen from
the formula \eqref{Boperator}.}
when $\pa_\theta \chi = 0$
(see \eqref{ham1}). See Figure \ref{fig:chan}, right panel.
	Fields of this form play an important role in the problem of confining
	a plasma with a magnetic field \cite{Helandernote}.   Such fields may sometimes be regarded as MHS solutions held steady by external forcing (e.g. by current carrying coils in some particular geometry) \cite{CDG2,CDG3}. 
	
Suppose $\theta, \phi$ form a coordinate system
on $S_\psi$ and so we write $u = u(\psi, \theta, \phi)$.  Then if $B$ is as in
\eqref{Bformmix} with $\pa_\theta \chi = \pa_\phi \chi = 0$,
it follows
after writing $\iota(\psi) = \chi'(\psi)$,
\begin{equation}
(B\cdot \nabla) u = \left[ \pa_\phi u + \iota(\psi)\pa_\theta u\right] J,
 \label{introint}
\end{equation}
where $J = \nabla \psi \times \nabla \theta \cdot \nabla \phi$.
Generally, by a theorem of Sternberg \cite{sternberg}, if $B$
is \emph{any} nonvanishing divergence-free vector field fibered by a function $\psi$,
(in particular, this includes the case $\chi = \chi(\psi, \phi)$ of
\eqref{Bformmix})
then on each
$S_\psi$ there are coordinates $\theta, \phi$ and a number $\iota = \iota(\psi)$
so that, expressed in these coordinates, $B$ takes the form \eqref{introint}
for a function $J = J(\psi, \theta, \phi) > 0$. We call the function
$\iota$ from \eqref{introint} the rotational transform. Our main result in
three dimensions,  proven in \S \ref{3dsecmix},  is that provided $\iota$ is invertible with Lipschitz
inverse, we have convergence $T_\ve \to T_0$ in $H^1(D)$.

\begin{cor}
	\label{3dcor}
	Suppose that $B$ is a nonvanishing divergence-free vector field
	fibered by a function $\psi$. Suppose that the rotational transform
	$\iota$ from \eqref{introint} is invertible and 	for  some $L > 0$
	\begin{equation}
	 |\psi_1 - \psi_2| \leq L |\iota(\psi_1) - \iota(\psi_2)|
	 \label{mixlip}
	\end{equation}
	holds for all $\psi_1, \psi_2 \in I$.
	Then condition \eqref{measureergo} holds
	for any $\gamma > 2$ with $c = 1$.  Consequently,
	\be
\| T_\ve-T_0\|_{H^1(D)} \leq C \ve^{\frac{1}{3}},
\ee
 where $T_0 = \Theta(\psi)$ where $\Theta$ is given by \eqref{limitTheta}.
\end{cor}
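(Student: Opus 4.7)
\textbf{Proof plan for Corollary \ref{3dcor}.} The plan is to verify the ergodicity condition \eqref{measureergo} with exponent $c=1$ for any $\gamma>2$; once that is established, invoking Theorem \ref{mainmixthm} immediately yields the advertised convergence rate $\ve^{c/(c+2)} = \ve^{1/3}$ in $H^1(D)$.

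First I would work on each invariant surface $S_\psi$ in coordinates $(\theta,\phi)\in\mathbb{T}^2$ furnished by Sternberg's theorem, in which the formula \eqref{introint} gives $\nabla_B u = J(\psi,\theta,\phi)\bigl[\partial_\phi u + \iota(\psi)\partial_\theta u\bigr]$ with $J$ smooth and strictly positive. Since $\psi$ varies over the compact interval $I$, the induced metric on $S_\psi$ is uniformly equivalent to the flat metric on $\mathbb{T}^2$, and $J$ is uniformly bounded above and below; therefore both $\|\nabla_B u\|_{L^2(S_\psi)}$ and $\|u\|_{\dot H^{-\gamma}(S_\psi)}$ are comparable (with $\psi$-uniform constants) to their flat-torus analogues. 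Expanding $u=\sum_{k=(m,n)\in\mathbb{Z}^2\setminus\{0\}}\hat u_k e^{i(m\phi+n\theta)}$ and applying Parseval, the defining inequality of $E(\gamma,M)$ becomes
\begin{equation*}
\sum_{k\ne 0}(m^2+n^2)^{-\gamma}|\hat u_k|^2 \le C M^2 \sum_{k\ne 0}\bigl|m+\iota(\psi)n\bigr|^2|\hat u_k|^2.
\end{equation*}
Consequently $\psi\in N(\gamma,M)$ precisely when there exists some $(m,n)\ne 0$ with $\bigl|m+\iota(\psi)n\bigr|<c_0 M^{-1}(m^2+n^2)^{-\gamma/2}$.

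The heart of the argument is a Diophantine-type measure estimate. For each fixed $n\ne 0$, the set of $\iota$ in a bounded interval satisfying $|m+\iota n|<\delta$ has one-dimensional Lebesgue measure $2\delta/|n|$; the Lipschitz bound \eqref{mixlip} on $\iota^{-1}$ transfers this into $\mu\bigl(\{\psi:|m+\iota(\psi)n|<\delta\}\bigr)\le 2L\delta/|n|$. The case $n=0$ is vacuous for $M$ large since it forces $|m|^{\gamma+1}<c_0/M$. Taking a union bound over $(m,n)\in\mathbb{Z}^2\setminus\{0\}$,
\begin{equation*}
\mu\bigl(N(\gamma,M)\bigr)\le \frac{2Lc_0}{M}\sum_{n\ne 0}\frac{1}{|n|}\sum_{m\in\mathbb{Z}}(m^2+n^2)^{-\gamma/2}.
\end{equation*}
Bounding the inner sum by a constant multiple of $|n|^{1-\gamma}$ (convergent for $\gamma>1$) and summing $\sum_{n\neq 0}|n|^{-\gamma}$ (convergent for $\gamma>1$, with headroom for $\gamma>2$), one arrives at $\mu(N(\gamma,M))\lesssim M^{-1}$, which is \eqref{measureergo} with $c=1$. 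Theorem \ref{mainmixthm} then delivers the corollary.

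The main obstacle I anticipate is technical rather than conceptual: one must establish the uniform-in-$\psi$ comparability between the intrinsic Sobolev norms on $S_\psi$ and the standard norms on the model torus $\mathbb{T}^2$, and between $\|\nabla_B u\|_{L^2(S_\psi)}$ and $\|(\partial_\phi+\iota(\psi)\partial_\theta)u\|_{L^2(\mathbb{T}^2)}$. Both rely on $J$ and the metric coefficients being smooth and strictly positive on the compact shell $D$, hence controlled uniformly as $\psi$ ranges over $I$. The Diophantine measure step itself is essentially standard once the Fourier characterization of $E(\gamma,M)$ is in place.
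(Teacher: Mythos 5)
Your proposal is correct and follows essentially the same route as the paper: pass to Sternberg coordinates so that $\nabla_B u = J(\partial_\phi + \iota(\psi)\partial_\theta)u$, characterize $N(\gamma,M)$ via the Fourier small-divisor condition, and bound its measure by a union bound over $(m,n)$ using the Lipschitz lower bound \eqref{mixlip} on $\iota$, exactly as in Lemma \ref{diolem}. The only cosmetic difference is that where you propose to prove uniform comparability of the intrinsic Sobolev norms on $S_\psi$ with the flat-torus norms, the paper instead simply redefines the ergodic sets relative to the Sternberg coordinates (the sets $\widetilde{E}(\gamma,M)$ in \eqref{mixingsetstern}) and observes that the main theorem's proof is unchanged.
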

In other words, we show that the ergodic condition holds
for integrable Arnol'd fibered fields $B$ with monotone rotational
transform. Such fields
are of specific interest in the plasma physics community, see the discussion in \cite{CDG2}. However such plasma equilibria, if they exist, may be unstable. Thus, it is important to also understand
the behavior of non-integrable fields $\pa_\theta \chi \not=0$.
There is an obstruction: the behavior of particle transport (and thus of heat) in
non-integrable fields can be quite complicated
because non-integrable Hamiltonian systems may
exhibit chaos.

In \cite{PHH21}, the authors consider non-integrable magnetic fields taking
the form \eqref{Bformmix} where
\begin{equation}
 \chi_\ve(\psi, \theta, \phi) = \chi_0(\psi) + \ve^a \chi_1(\psi, \theta, \phi),
 \label{perturbchi}
\end{equation}
and $a \geq 1/2$.
A modification of the proof of Theorem \ref{mainmixthm} (see Section \ref{3dnonintsec})
gives the following
generalization of Corollary \ref{3dcor} to fields of this type which are ``weakly nonintegrable.''  We require the following anisotropic Sobolev spaces tailored to the invariant tori: $f\in L^2(D)$ which are finite in the norm
\be\label{anissob}
\|f\|_{H^{(0, \gamma)}}^2 := \int_{\psi_-}^{\psi_+} \|f(\psi,\cdot)\|_{H^\gamma(S_\psi)}^2\, \rmd \psi .
\ee
\begin{theorem}
	\label{3dcornonint}
	Suppose that $B$ is as in \eqref{Bformmix} where $ \chi_\ve$ is given by \eqref{perturbchi} for $a\geq 1/2$  and satisfies $\|\pa_\theta  \chi_1\|_{L^\infty(D)}
	< 1$ and $\pa_\theta \chi_1|_{\pa D} = 0$. Moreover, assume that $\iota = \chi_0'$ satisfies
	the condition from Corollary \ref{3dcor}. Then letting $B_0$ be defined by \eqref{Bformmix}  with $\ve=0$, 
 	there is a constant $C = C(L)$ such that
\begin{align}
	   \|\nabla_{b_0}^\perp (T_\ve - T)\|_{L^2}
		 +\|T_\ve - T\|_{L^2}^2
		&\leq
		C \ve^{2a-1} \|\pa_\theta \chi_1\|_{L^2}^2
		+
		C\ve^{\frac{1}{3}}
		\left(\|\Delta T_0\|_{H^{(0, \gamma)}}^2 + \|\Delta T_0\|_{L^\infty}^2 \right),
		\label{3dnonintest}
		\\
		\|\nabla_{b_0} (T_\ve - T)\|_{L^2} &\leq
		C \ve^{2a} \|\pa_\theta \chi_1\|_{L^2}^2
		+
		 C\ve^{\frac{4}{3}}
		\left(\|\Delta T_0\|_{H^{(0, \gamma)}}^2 + \|\Delta T_0\|_{L^\infty}^2 \right).
		\label{3dnonintest2}
\end{align}
\end{theorem}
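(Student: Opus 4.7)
The plan is a two-step comparison that reduces the proof to the already-established Corollary \ref{3dcor}. Introduce an intermediate temperature $\widetilde T$ as the solution of \eqref{ADE} with $B_\ve$ replaced by the integrable field $B_0 := B_\ve|_{\ve=0}$, keeping the same diffusion parameter $\ve$ and the same boundary data. Since $B_0$ satisfies the hypotheses of Corollary \ref{3dcor}, that result, tracked separately in the parallel and perpendicular components of the natural anisotropic energy, bounds $\widetilde T - T_0$ by exactly the $\ve^{1/3}$- and $\ve^{4/3}$-type (integrability-independent) contributions appearing on the right-hand sides of \eqref{3dnonintest}--\eqref{3dnonintest2}, with the regularity factors $\|\Delta T_0\|_{H^{(0,\gamma)}}$ and $\|\Delta T_0\|_{L^\infty}$ arising exactly as they do in the proof of that corollary. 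Splitting $T_\ve - T_0 = (T_\ve - \widetilde T) + (\widetilde T - T_0)$ and applying the triangle inequality, it suffices to bound $w := T_\ve - \widetilde T$ in the corresponding norms.

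Rewriting the principal term of \eqref{ADE} as $\div\bigl((1-\ve)\,b\otimes b\,\nabla T + \ve\,\nabla T\bigr)$ and subtracting the two equations, one sees that $w$ vanishes on $\pa D$ and satisfies
\be
\div\bigl((1-\ve)\,b_\ve\otimes b_\ve\,\nabla w + \ve\,\nabla w\bigr) = -(1-\ve)\,\div F,\qquad F := (b_\ve\otimes b_\ve - b_0\otimes b_0)\,\nabla\widetilde T.
\ee
From $B_\ve - B_0 = \ve^a\,\nabla\phi\times\nabla\chi_1$ and the hypothesis $\|\pa_\theta\chi_1\|_{L^\infty}<1$ (which guarantees $|B_\ve|\geq c>0$), one has pointwise $|b_\ve - b_0|\leq C\ve^a(|\pa_\theta\chi_1| + |\pa_\psi\chi_1|)$. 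The essential simplification is to split $\widetilde T = T_0 + R_\ve$ and use that $B_0$ fibers $\psi$, so that $b_0\cdot\nabla T_0 = 0$: the leading piece of $F$ pairs the $\psi$-transverse part of $b_\ve - b_0$, which is governed by $\pa_\theta\chi_1$ alone (the $\pa_\psi\chi_1$ contribution to $B_\ve - B_0 = \ve^a(\pa_\psi\chi_1\,\nabla\phi\times\nabla\psi + \pa_\theta\chi_1\,\nabla\phi\times\nabla\theta)$ is tangent to $S_\psi$), against $\nabla T_0 \propto \nabla\psi$. Testing the PDE against $w$, applying Cauchy-Schwarz, and absorbing the parallel contribution into the left then yields
\be
\|\nabla_{b_\ve} w\|_{L^2}^2 + \ve\,\|\nabla_{b_\ve}^\perp w\|_{L^2}^2 \leq C\ve^{2a}\|\pa_\theta\chi_1\|_{L^2}^2 + (\text{remainders in }R_\ve),
\ee
so dividing the $\ve$-weighted perpendicular piece gives $\|\nabla_{b_\ve}^\perp w\|_{L^2}^2\leq C\ve^{2a-1}$. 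A one-dimensional Poincar\'e inequality transverse to the foliation by the surfaces $S_\psi$ (available since $w|_{S_\pm}=0$ and the transverse direction $\nabla\psi/|\nabla\psi|$ lies in $b_0^\perp$) then controls $\|w\|_{L^2}^2\leq C\|\nabla_{b_0}^\perp w\|_{L^2}^2$, after transferring $\nabla_{b_\ve}^\perp\to\nabla_{b_0}^\perp$ at an extra cost of order $\ve^a$.

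The main obstacle will be extracting the sharper parallel bound \eqref{3dnonintest2}: the natural energy supplies only $\|\nabla_{b_\ve}w\|_{L^2}\lesssim \ve^a$, whereas the statement demands $\|\nabla_{b_0}w\|_{L^2}^2\lesssim \ve^{2a}$. The direct conversion $\nabla_{b_0}w = \nabla_{b_\ve}w - (b_\ve-b_0)\cdot\nabla w$, combined with the fact that $b_\ve - b_0$ is perpendicular to $b_0$ up to $O(\ve^{2a})$ (both being unit vectors), pairs $(b_\ve - b_0)$ against the perpendicular part of $\nabla w$, whose $L^2$-norm is only $\ve^{a-1/2}$, producing $\ve^{2a-1/2}$; a more careful argument exploiting that the $\pa_\theta\chi_1$ component alone contracts nontrivially with $\nabla T_0$, together with the boundary condition $\pa_\theta\chi_1|_{\pa D}=0$ permitting an additional integration by parts, is required to recover the full $\ve^{2a}$. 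A secondary technical issue is the accounting of $R_\ve$-dependent remainders in $F$: these must be absorbed into the $\ve^{1/3}$- and $\ve^{4/3}$-type terms from Corollary \ref{3dcor}, which works directly provided one carries along the norms $\|\Delta T_0\|_{H^{(0,\gamma)}}$ and $\|\Delta T_0\|_{L^\infty}$ in the same way they enter that corollary's proof.
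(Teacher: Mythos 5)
Your two-step decomposition through the intermediate solution $\widetilde T$ of the $B_0$-problem is a genuinely different route from the paper's, which compares $T_\ve$ directly with $T_0$: there $\rho=T_\ve-T_0$ solves $\div(b\nabla_b\rho)+\ve\div(\nabla_b^\perp\rho)=(\ve^{1+a}-\ve^a)\div(b\,\nabla_{b_1}T_0)-\ve\,\Delta T_0$, the first source (after one integration by parts, legitimate because $\pa_\theta\chi_1|_{\pa D}=0$ makes $b$ tangent to $\pa D$) pairs the pointwise-bounded quantity $|\nabla_{b_1}T_0|\leq|\pa_\theta\chi_1|\,|T_0'|$ against the \emph{unweighted} energy term $\nabla_b\rho$, yielding $\ve^{2a}\|\pa_\theta\chi_1\|_{L^2}^2$ after Young, and the second source is handled verbatim by the ergodicity machinery of Proposition \ref{rhoest} with $M=\ve^{-1/3}$; the norms are then converted from $b$ to $b_0$ by \eqref{b0tob}. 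No remainder like your $R_\ve$ ever appears.

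The gap in your version is precisely the term you dismiss as a ``secondary technical issue,'' namely $F_2=(b_\ve\otimes b_\ve-b_0\otimes b_0)\nabla R_\ve$ with $R_\ve=\widetilde T-T_0$. The direct estimate $|F_2|\lesssim\ve^a|\nabla R_\ve|$ forces you to pair $F_2$ with the full gradient $\nabla w$, whose energy weight is only $\ve$; Young's inequality then leaves $\ve^{2a-1}\|\nabla R_\ve\|_{L^2}^2\sim\ve^{2a-2/3}$ on the right of the energy identity (using $\|\nabla R_\ve\|_{L^2}^2\lesssim\ve^{1/3}$, which is all Proposition \ref{rhoest} gives for the full gradient). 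At $a=1/2$ this is $\ve^{1/3}$, which after dividing by the weight $\ve$ gives $\|\nabla_{b_\ve}^\perp w\|_{L^2}^2\lesssim\ve^{-2/3}$ --- divergent, so even \eqref{3dnonintest} is lost, let alone the $\ve^{4/3}$ in \eqref{3dnonintest2}. This is fixable, but it requires an idea you do not supply: split $b_\ve\otimes b_\ve-b_0\otimes b_0=b_\ve\otimes(b_\ve-b_0)+(b_\ve-b_0)\otimes b_0$, so that $\int F_2\cdot\nabla w$ becomes $\int[(b_\ve-b_0)\cdot\nabla R_\ve](b_\ve\cdot\nabla w)$, which pairs $\ve^a\|\nabla R_\ve\|_{L^2}$ with the strong term $\|\nabla_{b_\ve}w\|_{L^2}$ at a cost of only $\ve^{2a}\|\nabla R_\ve\|_{L^2}^2\lesssim\ve^{2a+1/3}$, plus $\int(b_0\cdot\nabla R_\ve)[(b_\ve-b_0)\cdot\nabla w]$, which exploits the \emph{anisotropic} bound $\|\nabla_{b_0}R_\ve\|_{L^2}^2\lesssim\ve^{4/3}$ and costs $\ve^{2a-1}\|\nabla_{b_0}R_\ve\|_{L^2}^2\lesssim\ve^{2a+1/3}$; both land within budget. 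By contrast, the step you flag as the ``main obstacle'' is benign: the estimates are for squared norms (as the paper's own derivation of \eqref{3dnonintest2} makes clear), and the conversion error $\ve^{2a}\|\nabla w\|_{L^2}^2\lesssim\ve^{4a-1}\leq\ve^{2a}$ for $a\geq 1/2$ is already acceptable without any further integration by parts.
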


When $a > 1/2$, the estimate \eqref{3dnonintest} implies that $T_\ve \to T_0$
almost everywhere. When $a = 1/2$, we show the same result is true, and
in this case, even though $\res = \lim_{\ve \to 0} T_\ve - T_0 \in H^1(D)$
may not vanish almost everywhere,
we find $\res|_{S_\psi} = 0$ for all $\psi$ except possibly for a family
of $\psi$ lying in a set of measure zero. Here, $\res|_{S_\psi}$ denotes
the trace of the function $\res$ on the surface $S_\psi$, and this quantity is well-defined whenever $\res \in H^1(D)$, by the trace theorem.
In fact, the support of $\res = \lim_{\ve \to 0 } \rho_\ve$ is contained in the collection of non-ergodic surfaces
	$N(\gamma) = I \setminus E(\gamma)$, which has measure zero, where $E(\gamma)$ denote the family of ergodic surfaces
 	\begin{equation}
 	 E(\gamma) = \bigcup_{M > 0} E(\gamma, M).
	 \label{goodsets}
 	\end{equation}
	
\begin{cor}
	\label{3dcornonintspt}
 	Under the hypotheses of Corollary \ref{3dcornonint} with $a=1/2$, the sequence
	 $\rho_\ve = T_\ve - T_0$ converges weakly in $H^1$ to a distribution
	 $\res$ in $H^1(D)$ with the property that 
		\begin{equation}
		 \res|_{S\psi} = 0, \qquad \text{ whenever } \psi \in E(\gamma).
		 \label{weakvanishing}
		\end{equation}
		That is, the support of $\res$ is contained in $N(\gamma) = I\setminus E(\gamma)$.
\end{cor}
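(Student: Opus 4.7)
The strategy is to extract a weak $H^1$-limit of $\rho_\ve$ from the uniform bounds of Theorem~\ref{3dcornonint}, use the ergodicity condition on each invariant torus to conclude that the limit is constant on every ergodic surface, and then identify this constant as zero via a one-dimensional flux balance inherited from the $\psi$-averaged equation.

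With $a=1/2$ the exponent $2a-1$ vanishes in \eqref{3dnonintest}, so that estimate yields $\|\rho_\ve\|_{L^2}+\|\nabla_{b_0}^\perp\rho_\ve\|_{L^2} \le C$ uniformly in $\ve$, while \eqref{3dnonintest2} gives $\|\nabla_{b_0}\rho_\ve\|_{L^2} = O(\ve)$. The pointwise splitting $|\nabla u|^2 = |\nabla_{b_0}u|^2 + |\nabla_{b_0}^\perp u|^2$ then bounds $\{\rho_\ve\}$ uniformly in $H^1(D)$, so Banach--Alaoglu and Rellich--Kondrachov extract a subsequence with $\rho_{\ve_k}\rightharpoonup\res$ weakly in $H^1$ and strongly in $L^2$. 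Strong convergence $\nabla_{b_0}\rho_{\ve_k}\to 0$ then forces $\nabla_{b_0}\res = 0$ in $L^2(D)$. By the coarea formula, for a.e.\ $\psi\in I$ the trace $\res|_{S_\psi}$ lies in $H^1(S_\psi)$ with $\nabla_B(\res|_{S_\psi})=0$. For $\psi\in E(\gamma,M)$, applying the ergodicity bound \eqref{mixingset} to $u=\res|_{S_\psi}-\overline{\res|_{S_\psi}}$ gives $\|u\|_{\dot{H}^{-\gamma}(S_\psi)} \le M\|\nabla_B u\|_{L^2}=0$, so that $\res|_{S_\psi}$ equals its own mean $c(\psi):=\overline{\res|_{S_\psi}}$. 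Continuity of the trace map $H^1(D)\to L^2(S_\psi)$ in the parameter $\psi$ extends this identification from a.e.\ $\psi\in E(\gamma)$ to every such $\psi$.

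It remains to show $c\equiv 0$ on $E(\gamma)$. For this I would test the weak form of $\div(b_\ve\nabla_{b_\ve}T_\ve + \ve\nabla_{b_\ve}^\perp T_\ve)=0$ against $\varphi=\varphi(\psi)$ with $\varphi(\psi_\pm)=0$; using the identity $b_\ve\cdot\nabla\psi = -\ve^{1/2}(\pa_\theta\chi_1)J/|B_\ve|$ and integrating by parts yields an asymptotic one-dimensional flux identity for the $\psi$-average of $T_\ve$. Subtracting the corresponding identity satisfied by $T_0=\Theta(\psi)$ from \eqref{effectivebc} and passing to $\ve\to 0$ using $\nabla_{b_0}\rho_\ve \to 0$ strongly together with the smooth expansion $b_\ve = b_0 + \ve^{1/2}b_1+O(\ve)$ produces a boundary-value problem for $c(\psi)$ whose only solution with the inherited endpoint conditions $c(\psi_\pm)=0$ is $c\equiv 0$. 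Uniqueness of the limit then implies that the whole sequence $\rho_\ve$ converges weakly to $\res$, and the support conclusion follows because $\res\equiv 0$ almost everywhere on the full-measure union $\bigcup_{\psi\in E(\gamma)}S_\psi$.

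The main obstacle is this final identification: at the critical exponent $a=1/2$ the non-integrable perturbation contributes to the $\psi$-flux at the same order as the principal diffusive term, so one must carefully track the cancellation between the leading $\ve^{1/2}$ contribution of $\nabla_{b_\ve}T_0$ and that of $\nabla_{b_\ve}\rho_\ve$ in order to extract the correct limit ODE. The earlier steps are comparatively routine given the quantitative estimates of Theorem~\ref{3dcornonint} together with standard compactness and trace theorems for $H^1$ functions on fibered domains.
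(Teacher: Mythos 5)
Your first half is correct but follows a genuinely different route from the paper. You obtain constancy of $\res|_{S_\psi}$ on ergodic surfaces by combining the a.e.-in-$\psi$ trace of $\nabla_{B}\res=0$ (via the co-area formula) with the ergodicity inequality \eqref{mixingset} applied to $u=\res|_{S_\psi}-\overline{\res|_{S_\psi}}$, and then upgrading from a.e.\ $\psi$ to every $\psi\in E(\gamma)$ using the $1/2$-H\"older continuity of $\psi\mapsto \res|_{S_\psi}$ in $L^2(S_\psi)$ together with the fact that the constants form a closed subspace. The paper instead proves a weak-trace identity $\int_{S_\psi}\res\,\div_{S_\psi}(B'v)\,\rmd\mathscr{H}^{(d-1)}=0$ valid for \emph{every} $\psi$ by a Stokes/cutoff argument (precisely because $\nabla_B\res|_{S_\psi}$ is not defined surface-by-surface), and then inverts the magnetic differential equation $\div_{S_\psi}(B'w)=v$ on Diophantine surfaces (Lemma \ref{MDElem}, which costs $\gamma$ derivatives and is where the Diophantine condition enters) to convert that identity into $\int_{S_\psi}\res\,v=0$. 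Your continuity argument is more elementary and avoids the cutoff lemma entirely; the paper's argument is what localizes the conclusion to individual surfaces without invoking a.e.\ selection, and it is also the mechanism by which the paper passes from ``$\nabla_B\res$ vanishes weakly on $S_\psi$'' to the vanishing of $\res|_{S_\psi}$ itself.

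The genuine gap is your final step, which you yourself flag as the main obstacle but do not carry out: identifying the constant $c(\psi)=\overline{\res|_{S_\psi}}$ as zero. Constancy on each surface plus the boundary conditions $c(\psi_\pm)=0$ does not suffice (e.g.\ $\res=\sin(\pi(\psi-\psi_-)/(\psi_+-\psi_-))$ is an $H^1$ function, annihilated by $\nabla_{b_0}$, constant on every surface, and vanishing on $S_\pm$), so some input from the equation is essential. Your proposed route --- testing the weak form against $\varphi(\psi)$ to derive a limiting ODE for $c$ --- does not close at the critical exponent $a=1/2$: after dividing the flux identity by $\ve$, the cross term $\ve^{-1/2}\int_D \nabla_b\rho_\ve\,(b_1\cdot\nabla\psi)\,\varphi'\,\rmd\mu$ is only $O(1)$ by \eqref{3dnonintest2} and \eqref{b0tob} (which give $\|\nabla_b\rho_\ve\|_{L^2}=O(\ve^{1/2})$), and its weak limit is not identified by any of the available estimates, so no closed boundary-value problem for $c(\psi)$ emerges. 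The paper never derives such an ODE; the vanishing of the trace is extracted instead from the weak-trace identity combined with the solvability of the magnetic differential equation on $D(\gamma)\subseteq E(\gamma)$, i.e.\ from the small-divisor structure on Diophantine surfaces rather than from a one-dimensional flux balance. To complete your argument you would either need to control the weak limit of $\ve^{-1/2}\nabla_b\rho_\ve$ or replace the flux-balance step with an argument of the paper's type.
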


Our final result relates directly to the work of \cite{PHH21}. In that paper, the authors consider the sets
\begin{equation}
 \mathcal{N}(\ve) = \{ (\psi, \theta, \phi) : |\nabla_bT(\psi, \theta, \phi)|^2
 \geq \ve |\nabla_b^\perp T(\psi, \theta, \phi)|^2\},
 \label{nonintvol}
\end{equation}
where $b = B/|B|$ with $B$ as in \eqref{Bformmix}, and study them as a proxy for
the ``non-integrability'' of the field $B$.
Using the estimates from the above section, we can get an upper
bound on the measure set in the limit $\ve \to 0$.
\begin{prop}\label{propconj}
	Define $B$ as in \eqref{Bformmix} and $\mathcal{N}(\ve)$ as in \eqref{nonintvol}.
	Suppose that the boundary values $T_{\pm}$ from \eqref{effectivebc} satisfy
	$T_+\not= T_-$.
	Under the hypotheses of Corollary \ref{3dcornonint},
	there is a constant $C$ depending continuously on $\|T_0'\|_{L^\infty},$
	$\|1/T_0'\|_{L^\infty}$,
	$\|T_0''\|_{L^\infty}$, $(1 - \|\nabla \chi_1\|_{L^\infty})^{-1}$
	and $(2-\gamma)^{-1}$ so that
	\begin{equation}
	 \mu(\mathcal{N}(\ve)) \leq C  \left(\ve^{2a - 1}\|\pa_\theta \chi_1\|_{L^\infty}^2
	 + \ve^{1/3}\|\Delta T_0\|_{H^{(0, \gamma)}}^2\right).
	 \label{nonintmeasureestimate}
	\end{equation}
\end{prop}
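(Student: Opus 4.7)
The plan is to combine a pointwise lower bound on $|\nabla_b^\perp T_0|$ with the defect estimates \eqref{3dnonintest}--\eqref{3dnonintest2} applied to $\rho_\ve := T_\ve - T_0$, and then conclude via a Chebyshev-type integration. The guiding observation is that on $\mathcal{N}(\ve)$ the transverse gradient $|\nabla_b^\perp T_\ve|$ is forced to be small relative to the longitudinal, yet $|\nabla_b^\perp T_0|$ is bounded away from zero (because $T_0=\Theta(\psi)$ is monotone in $\psi$ when $T_+\neq T_-$, and $|\nabla\psi|$ is nonvanishing). The defect $\rho_\ve$ must account for the gap, and the Theorem \ref{3dcornonint} bounds on $\rho_\ve$ then control the Lebesgue measure of $\mathcal{N}(\ve)$.

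The first step is to establish $|\nabla_b^\perp T_0|\geq c_0>0$ pointwise on $D$. Since $\nabla T_0=\Theta'(\psi)\nabla\psi$, the ODE \eqref{effectivebc} integrates to $\Theta'(\psi)\int_{S_\psi}|\nabla\psi|\,\rmd\mathscr{H}^{(d-1)}=K$, with $K\neq 0$ determined by $T_\pm$ under the hypothesis $T_+\neq T_-$; hence $|\Theta'|\geq \|1/T_0'\|_{L^\infty}^{-1}>0$. Applying \eqref{Boperator} with $u=\psi$ gives $B\cdot\nabla\psi=-\ve^a\pa_\theta\chi_1\cdot J$, so $|b\cdot\nabla\psi|\lesssim \ve^a\|\pa_\theta\chi_1\|_{L^\infty}$ and thus
\begin{equation*}
|\nabla_b^\perp T_0|^2=|\Theta'(\psi)|^2\bigl(|\nabla\psi|^2-(b\cdot\nabla\psi)^2\bigr)\geq c_0^2,
\end{equation*}
for $\ve$ sufficiently small, with $c_0$ controlled by $\|1/T_0'\|_{L^\infty}$, $\inf_D|\nabla\psi|$ and $(1-\|\pa_\theta\chi_1\|_{L^\infty})^{-1}$. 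Decomposing $T_\ve=T_0+\rho_\ve$ and chaining the pointwise inequalities $|\nabla_b^\perp T_0|^2\leq 2|\nabla_b^\perp T_\ve|^2+2|\nabla_b^\perp\rho_\ve|^2$, the defining bound $|\nabla_b^\perp T_\ve|^2\leq \ve^{-1}|\nabla_b T_\ve|^2$ on $\mathcal{N}(\ve)$, and $|\nabla_b T_\ve|^2\leq 2|\nabla_b T_0|^2+2|\nabla_b\rho_\ve|^2$, one derives
\begin{equation*}
c_0^2\,\mathbf{1}_{\mathcal{N}(\ve)}\leq 4\ve^{-1}\bigl(|\nabla_b T_0|^2+|\nabla_b\rho_\ve|^2\bigr)+2|\nabla_b^\perp\rho_\ve|^2.
\end{equation*}
Integrating over $D$ and estimating the three contributions via (i) $\|\nabla_b T_0\|_{L^2}^2\lesssim \ve^{2a}\|\Theta'\|_{L^\infty}^2\|\pa_\theta\chi_1\|_{L^\infty}^2$ from the formula for $B\cdot\nabla T_0$ above; (ii) estimate \eqref{3dnonintest2} divided by $\ve$ for the $\nabla_b\rho_\ve$ term; and (iii) estimate \eqref{3dnonintest} directly for the $\nabla_b^\perp\rho_\ve$ term yields \eqref{nonintmeasureestimate}.

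The main technical obstacle is the conversion $\nabla_b\leftrightarrow\nabla_{b_0}$: Theorem \ref{3dcornonint} is stated in terms of the reference connection $b_0$, while $\mathcal{N}(\ve)$ is defined using the perturbed direction $b$. Since $\|b-b_0\|_{L^\infty}\lesssim \ve^a$ (using the lower bound on $|B|$ provided by $\|\pa_\theta\chi_1\|_{L^\infty}<1$), the swap produces commutator errors $((b-b_0)\cdot\nabla)\rho_\ve$ of size $\ve^a\|\nabla\rho_\ve\|_{L^2}$. Combining the $H^1$ bound on $\rho_\ve$ obtained by summing \eqref{3dnonintest} and \eqref{3dnonintest2} with the assumption $a\geq 1/2$, these corrections are of order $\ve^{2a}(\ve^{2a-1}+\ve^{1/3})$, strictly lower order than the leading $\ve^{2a-1}$ and $\ve^{1/3}$ contributions and hence absorbable into the constant $C$ with the stated dependencies.
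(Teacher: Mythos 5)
Your proposal is correct and follows essentially the same route as the paper's proof: a Chebyshev-type bound exploiting that $T_0'$ is bounded below (so $|\nabla_b^\perp T_0|$ is), the defining inequality $|\nabla_b^\perp T|^2\leq \ve^{-1}|\nabla_b T|^2$ on $\mathcal{N}(\ve)$, the $O(\ve^{2a})$ smallness of $\nabla_b T_0$, and the defect estimates \eqref{3dnonintest}--\eqref{3dnonintest2} together with \eqref{b0tob} for the $b\leftrightarrow b_0$ conversion. The only cosmetic difference is that you set up a pointwise lower bound $|\nabla_b^\perp T_0|\geq c_0$ while the paper integrates $|T_0'|^2$ over $N$, which amounts to the same estimate.
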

Note that the ``integrable'' case corresponds to taking $\chi_1= 0$ and it follows that in this case
\begin{equation}
 \lim_{\ve \to 0} \mu(\mathcal{N}(\ve)) = 0.
 \label{}
\end{equation}
Since now $\chi = \chi_0$ is integrable, this agrees with the fact that
the effective volume of non-integrability is zero.
If $\chi_1$ is nonzero, we get the same result with $a > 1/2$
but if $a = 1/2$ we instead have
\begin{equation}
 \lim_{\ve\to 0} \mu(\mathcal{N}(\ve)) \leq
 C\|\pa_\theta \chi_1\|_{L^\infty}^2.
 \label{}
\end{equation}
This exhibits a relationship between the volume of the set \eqref{nonintvol}
and the non-integrability of the Hamiltonian $\chi$, captured by the $\theta$-dependence
of the perturbation $\chi_1$.
Our proof of the Proposition (see Section \ref{3dnonintsec}) partially confirms a conjecture announced in \cite{PHH21}.

\section{Proof of Theorem \ref{mainmixthm}}\label{secmainmixthm}
The limiting profile $\Theta(\psi)$ is a function of the flux function $\psi$ only, and is determined from the following heuristic. If $B$ is fibered, expanding $T_\ve = T_0 + \ve T_1$
	leads to
	\begin{align} \label{heurt0}
	 \div(b \nabla_b T_0) &=0,\\
	 \div(\nabla_b^\perp T_0) &= -\div(b \nabla_b T_1).
	 \label{heurt1}
	\end{align}
	Equation \eqref{heurt0} is underdetermined for $T_0$; indeed noting that $b\cdot \nabla \psi = 0$ one sees that any function
	\be
	T_0 = \Theta(\psi)
	\ee
	will satisfy \eqref{heurt0}.  This arbitrariness of $\Theta$ may be eliminated by considering the second condition. Indeed, note that
	since $b$ is tangent to each surface $S_\psi$, we have
	\begin{equation}
		\frac{\div(b \nabla_b T_1)}{|\nabla \psi|} \Big|_{S_\psi}  = \div_{S_\psi}\left(\left[ \frac{ b \nabla_b T_1}{|\nabla \psi|}\right]\Big|_{S_\psi}\right)
	\end{equation}
where
	$\div_{S_\psi}$ denotes the divergence operator on $S_\psi$. See Lemma \ref{lemdiv}. In light of this, equation
	\eqref{heurt1} comes with the following compatibility condition: on each invariant torus $S_\psi$,
	\begin{equation}
		\int_{S_\psi}  \frac{ \Delta T_0}{|\nabla \psi|}\rmd \mathscr{H}^{(d-1)} = 0,
	 \label{effectiveabstract}
	\end{equation}
	where $\rmd \mathscr{H}^{(d-1)}$ is the $(d-1)$ dimensional Hausdorff measure on $S_\psi$. Here we used that $\nabla_b^\perp T_0 = \nabla T_0$ since $b\cdot \nabla T_0 = 0$.
	Because  $T_0$ is constant on $S_{\psi}$, it follows from Lemma \eqref{streamderiv}
	(see \eqref{coareaderiv2}) that the solvability requirement
  \eqref{effectiveabstract} and the boundary conditions of \eqref{ADE} are satisfied if
	$\Theta$ is the unique solution to \eqref{effectivebc}. From \eqref{effectivebc}, we deduce that the effective temperature
distribution is give explicitly by
\be\label{limitTheta}
 \Theta(\psi) =T_-+ (T_+ -T_-)\frac{H(\psi;\ \psi_-)}{H(\psi_+;\psi_-) }  , \qquad \text{where} \qquad  H(\psi;\psi_-) := \int_{\psi_-}^\psi \frac{\rmd s }{\Gamma(s)} .
\ee

Note that in two-dimensions, $\Gamma(\psi)$ is simply the circulation of the vector field $B$ on the circle $S_\psi$:
\be
 \Gamma(\psi) =\int_{S_\psi} |\nabla \psi|\, \rmd \mathscr{H} = \int_{S_\psi} B\cdot \rmd \ell \qquad \text{in two dimensions}.
\ee

Observing \eqref{effectiveabstract}, we start with the following simple lemma.  We first introduce the homogenous fractional Sobolev seminorms on $S_\psi$. For each $S_\psi$ we pick coordinates $\theta_1,..., \theta_{d-1}$
on $S_\psi$ such that $\theta_j$ maps $S_\psi$ to $[0, 2\pi]$.
For $k \in \mathbb{Z}^{d-1}$, we define
\begin{equation}
 \widehat{u}(k) = \frac{1}{(2\pi)^{d-1}} \int_{[0,2\pi]^{d-1}}
 \prod_{j=1}^{d-1}   e^{ik_{j}\theta_{j}}
 u(\theta_1,\cdots, \theta_{d-1}) \rmd \theta_1\cdots \rmd \theta_{d-1},
 \label{fseries}
\end{equation}
and then for $\gamma \in \mathbb{R}$,  we define
$\|\cdot\|_{\dot{H}^\gamma}$
by
\begin{equation}
 \|u\|_{\dot{H}^\gamma(S_\psi)}^2 = \sum_{k \in \mathbb{Z}^{d-1}
 \setminus 0} |k|^{2\gamma} |\widehat{u}(k)|^2.
 \label{homoSob}
\end{equation}
Recall also that \eqref{anissob} defines the anisotropic spaces $H^{(0, \gamma)}$ tailored to the tori. The result is then
\begin{lemma}
	\label{productest}
 Suppose that $F \in H^{(0, \gamma)}(D)\cap L^\infty(D)$ for some $\gamma \geq 0$
 and that $F$ satisfies
 \begin{equation}
  \int_{S_\psi} \frac{F}{|\nabla \psi|} \, \rmd \mathscr{H}^{(d-1)}  = 0,
  \label{meanfree}
 \end{equation}
 for all $\psi \in [\psi_-, \psi_+]$. There is a constant $C$ depending
 only on $D$ so that
 for any $M > 0$ we have
 \begin{equation}
  \left| \int_D F u \, \rmd \mu  \right|
	\leq C M\|F\|_{H^{(0,\gamma)}}\|\nabla_b u\|_{L^2}
	+  C \mu(N(\gamma, M))^{1/2} \|F\|_{L^\infty} \|u\|_{L^2}.
  \label{productinftymix}
 \end{equation}

\end{lemma}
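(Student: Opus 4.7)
The strategy is to use the coarea formula to reduce $\int_D F u\, \rmd\mu$ to a $\psi$-integral of surface integrals $I(\psi) := \int_{S_\psi} \frac{F u}{|\nabla\psi|}\, \rmd\mathscr{H}^{(d-1)}$, split this $\psi$-integral across the ergodic and non-ergodic sets, and estimate the two contributions with entirely different tools: the ergodicity lower bound on $\nabla_B$ for $\psi \in E(\gamma, M)$, and a crude $L^\infty$ bound on $F$ together with the measure of the bad set $N(\gamma, M)$ on the complement.

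For $\psi \in E(\gamma, M)$, the mean-free hypothesis \eqref{meanfree} says that $F/|\nabla\psi|$ is orthogonal to constants in $L^2(S_\psi, \rmd\mathscr{H}^{(d-1)})$. Passing to the coordinates $\theta_1, \dots, \theta_{d-1}$ from \eqref{fseries}, in which $\rmd\mathscr{H}^{(d-1)} = J(\theta)\, \rmd\theta$ for a smooth non-vanishing Jacobian $J$, the function $\widetilde{F} := F J/|\nabla\psi|$ is then mean-zero with respect to the flat measure $\rmd\theta$, so its zero Fourier mode vanishes. Parseval applied to $I(\psi) = \int \widetilde{F} u\, \rmd\theta$ therefore gives the duality bound $|I(\psi)| \le \|\widetilde{F}\|_{\dot H^\gamma(S_\psi)} \|u\|_{\dot H^{-\gamma}(S_\psi)}$, with no contribution from $\bar u$. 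Multiplication by the smooth weight $J/|\nabla\psi|$ is bounded on $H^\gamma(S_\psi)$, so the first factor is controlled by $C \|F(\psi, \cdot)\|_{H^\gamma(S_\psi)}$ for a geometric constant $C = C(D)$, and the ergodicity condition on $\psi \in E(\gamma, M)$ yields $\|u\|_{\dot H^{-\gamma}(S_\psi)} \le M \|\nabla_B u\|_{L^2(S_\psi)} \le C M \|\nabla_b u\|_{L^2(S_\psi)}$ since $|B|$ is bounded on $D$. Cauchy--Schwarz in $\psi$ then bounds $\int_{E(\gamma, M)} |I(\psi)|\, \rmd\psi$ by $C M \|F\|_{H^{(0, \gamma)}} \bigl(\int_{\psi_-}^{\psi_+} \|\nabla_b u\|_{L^2(S_\psi)}^2\, \rmd\psi\bigr)^{1/2}$, and a further application of the coarea formula identifies the last factor with a constant multiple of $\|\nabla_b u\|_{L^2(D)}$.

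For the complementary piece it is cleanest to work directly on $D$: writing $\chi_N$ for the indicator of $\{x \in D : \psi(x) \in N(\gamma, M)\}$ and applying Cauchy--Schwarz, $|\int_D F u \chi_N\, \rmd\mu| \le \|F\|_{L^\infty} \|u\|_{L^2(D)} \|\chi_N\|_{L^2(D)}$, and by coarea $\|\chi_N\|_{L^2(D)}^2 = \int_{N(\gamma, M)} \int_{S_\psi} |\nabla\psi|^{-1}\, \rmd\mathscr{H}^{(d-1)}\, \rmd\psi \le C \mu(N(\gamma, M))$, with $C$ finite by the non-degeneracy of $\psi$ and the compactness of $D$. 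Summing the two bounds gives \eqref{productinftymix}. The main technical point I expect to have to monitor is the interplay between the intrinsic Fourier-based norm $\dot H^\gamma(S_\psi)$ of \eqref{homoSob} and the extrinsic surface measure $\rmd\mathscr{H}^{(d-1)}$: the Jacobian factor $J/|\nabla\psi|$ has to be absorbed into the multiplier bound, and one must verify that the $\dot H^\gamma$--$\dot H^{-\gamma}$ duality uses only the mean-freeness of $\widetilde{F}$ (rather than $u$), so that no spurious term involving a surface average of $u$ enters the final estimate.
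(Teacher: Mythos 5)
Your proof is correct and follows essentially the same route as the paper's: coarea reduction, Parseval on each $S_\psi$ with the Jacobian-weighted density (your $\widetilde F$ is the paper's $G = F|h|^{1/2}/|\nabla\psi|$), vanishing of the zero mode from \eqref{meanfree}, the $\dot H^\gamma$--$\dot H^{-\gamma}$ duality plus the ergodicity bound on $E(\gamma,M)$, and a crude $L^\infty$--$L^2$ bound on $N(\gamma,M)$. The only (cosmetic) differences are that you handle the bad set directly on $D$ via the indicator $\chi_N$ rather than in the Fourier representation, and you make explicit the multiplier bound absorbing $J/|\nabla\psi|$ into $\|F\|_{H^\gamma(S_\psi)}$, a step the paper leaves implicit.
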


\begin{proof}
	By the co-area formula \eqref{coarea}, we have
	\begin{equation}
	 \int_D F u\, \rmd \mu   =
	 \int_{\psi_-}^{\psi_+} \int_{S_\psi} \frac{F}{|\nabla \psi|} u\, \rmd \mathscr{H}^{(d-1)}  \rmd \psi .
	 \label{}
	\end{equation}
	Now, for each $\psi$, we write
		\begin{equation}
		 \int_{S_\psi} \frac{F}{|\nabla \psi|}
		 u \, \rmd \mathscr{H}^{(d-1)}
		 = \int_{[0,2\pi]^{d-1} } G(\psi, \theta_1,..., \theta_{d-1})
		 u(\psi, \theta_1,..., \theta_{d-1}) \rmd \theta_1\cdots \rmd \theta_{d-1},
		 \label{}
		\end{equation}
		where $G =  \frac{F}{|\nabla \psi|} |h|^{1/2}  $,
		where we are writing the metric on $S_\psi$ as $h = h_{\alpha\beta} \rmd \theta^\alpha \rmd \theta^\beta$
		and $|h| = \det h_{\alpha\beta}$. By Parseval's theorem,
		\begin{equation}
		 \int_{[0,2\pi]^{d-1} } G(\psi, \theta_1,..., \theta_{d-1})
		 u(\psi, \theta_1,..., \theta_{d-1}) \rmd \theta_1\cdots \rmd \theta_{d-1}
		 = (2\pi)^{d-1}\sum_{k \in \mathbb{Z}^{d-1}} \widehat{G}(\psi, k) \widehat{u}(\psi, -k),
		 \label{}
		\end{equation}
		where $\widehat{F}, \widehat{u}$ are defined as in \eqref{fseries}. Now we note that
		\begin{equation}
		 \widehat{G}(\psi, 0)
		 = \int_{[0,2\pi]^{d-1}}  \frac{F}{|\nabla \psi|}|h|^{1/2}
		 \rmd \theta_1\cdots \rmd \theta_{d-1}
		 = \int_{S_\psi} \frac{F}{|\nabla \psi|}\, \rmd \mathscr{H}^{(d-1)}  = 0,
		 \label{}
		\end{equation}
		by assumption. We therefore have
		\begin{equation}
		 \int_{S_\psi} \frac{F}{|\nabla \psi|}
		 u\, \rmd \mathscr{H}^{(d-1)}
		 = (2\pi)^{d-1}\sum_{k \in \mathbb{Z}^{d-1}\setminus 0}
		 \widehat{G}(\psi, k) \widehat{u}(\psi, -k),
		 \label{}
		\end{equation}
		and it follows that
		\begin{equation}
		 \left| \int_D F u\, \rmd \mu  \right|
		 \leq C\int_{\psi_-}^{\psi_+}
		 \sum_{|k|\not=0} |\widehat{G}(\psi, k)\widehat{u}(\psi, -k)|\, \rmd \psi .
		 \label{}
		\end{equation}
		Now we split $[\psi_-, \psi_+] = E(\gamma, M)\cup N(\gamma, M)$ and bound
		\begin{multline}
			\int_{\psi \in E(\gamma, M)}
 		 \sum_{|k|\not=0} |\widehat{G}(\psi, k)\widehat{u}(\psi, -k)|\,\, \rmd \psi
		 \\
		 \leq
		 \int_{\psi \in E(\gamma, M)} \|G\|_{H^\gamma(S_\psi)} \|u\|_{\dot{H}^{-\gamma}(S_\psi)}\, \rmd \psi
		 \leq M \int_{\psi \in E(\gamma, M)}\|G\|_{H^\gamma(S_\psi)}
		 \|\nabla_B u\|_{L^2(S_\psi)}\, \rmd \psi \\
		 \leq M \|G\|_{H^{(0,\gamma)}(D)} \|\nabla_B u\|_{L^2(D)},
		 \label{}
		\end{multline}
		by the definition of $E(\gamma, M)$
		and
		\begin{multline}
 		 \int_{\psi \in N(\gamma, M)}
		\sum_{|k|\not=0} |\widehat{G}(\psi, k)\widehat{u}(\psi, -k)|\,\, \rmd \psi
		\leq \|G\|_{L^2(N(\gamma, M))}\|u\|_{L^2(D)}\\
		\leq \mu(N(\gamma, M))^{1/2} \|G\|_{L^\infty(D)} \|u\|_{L^2(D)}.
		 \label{}
		\end{multline}
		This gives the result.
\end{proof}

Our main result, Theorem \ref{mainmixthm}, will be a direct result of the following
estimate.
\begin{prop}
	\label{rhoest}
	Define $T_0=\Theta(\psi)$ where $\Theta$ is given by \eqref{limitTheta} and let $\rho = T_\ve - T_0$.	Under the hypotheses of Theorem \ref{mainmixthm},
	there is a constant $C$ depending only on the domain $D$
	so that for each $\ve > 0$ and $M > 0$, we have
	\begin{equation}
	\|\nabla_b \rho\|_{L^2}^2 + \ve \|\nabla_b^\perp \rho\|_{L^2}^2
	+\ve \|\rho\|_{L^2}^2
	\leq C \left(M^2 \ve^2 + \ve \mu(N(\gamma, M))\right) \left(\|\Delta T_0\|_{H^{(0,\gamma)}}^2 + \|\Delta T_0\|_{L^\infty}^2\right)
	 \label{mainestimate}
	\end{equation}
\end{prop}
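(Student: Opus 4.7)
The plan is to derive a linear elliptic PDE for $\rho = T_\ve - T_0$, carry out a natural energy estimate, and invoke Lemma \ref{productest} to control the resulting forcing. Since $T_0 = \Theta(\psi)$ depends only on $\psi$ and $b\cdot \nabla \psi = 0$, one has $\nabla_b T_0 = 0$ and $\nabla_b^\perp T_0 = \nabla T_0$, so $\div(b\nabla_b T_0 + \ve \nabla_b^\perp T_0) = \ve \Delta T_0$. Subtracting this from the equation for $T_\ve$, and using $\Theta(\psi_\pm) = T_\pm$ to match boundary conditions, gives
\begin{equation*}
 \div(b\nabla_b \rho + \ve \nabla_b^\perp \rho) = -\ve \Delta T_0 \quad \text{in } D, \qquad \rho = 0 \quad \text{on } \partial D = S_+\cup S_-.
\end{equation*}

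Next, I would test this equation against $\rho$, integrate over $D$ (the boundary contributions vanish), and use the pointwise orthogonality $b\cdot \nabla_b^\perp \rho = 0$, which gives $\nabla \rho \cdot (b\nabla_b \rho + \ve \nabla_b^\perp \rho) = |\nabla_b \rho|^2 + \ve |\nabla_b^\perp \rho|^2$, to obtain
\begin{equation*}
 \|\nabla_b \rho\|_{L^2}^2 + \ve \|\nabla_b^\perp \rho\|_{L^2}^2 = \ve \int_D \Delta T_0\, \rho\, \rmd \mu.
\end{equation*}
The critical observation for bounding the right-hand side is that $F := \Delta T_0$ satisfies the mean-free condition \eqref{meanfree}: a direct computation using \eqref{coareaderiv2} yields $\int_{S_\psi}\Delta T_0 /|\nabla \psi|\, \rmd \mathscr{H}^{(d-1)} = (\Theta'(\psi)\Gamma(\psi))'$, which vanishes identically by the defining ODE \eqref{effectivebc} for $\Theta$. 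Applying Lemma \ref{productest} with $u = \rho$ then gives, for any $M > 0$,
\begin{equation*}
 \left|\int_D \Delta T_0\, \rho\, \rmd\mu\right| \leq CM \|\Delta T_0\|_{H^{(0,\gamma)}}\|\nabla_b \rho\|_{L^2} + C \mu(N(\gamma, M))^{1/2}\|\Delta T_0\|_{L^\infty}\|\rho\|_{L^2}.
\end{equation*}

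To close the estimate I would combine Young's inequality with Poincaré. The first summand on the right, multiplied by $\ve$, is absorbed into $\tfrac{1}{2}\|\nabla_b \rho\|_{L^2}^2$ on the left at the expense of $CM^2\ve^2 \|\Delta T_0\|_{H^{(0,\gamma)}}^2$. For the second summand, since $\rho$ vanishes on $\partial D$, the Poincaré inequality yields $\|\rho\|_{L^2} \leq C(\|\nabla_b \rho\|_{L^2} + \|\nabla_b^\perp \rho\|_{L^2})$; I would then split the product using an unweighted Young inequality against $\|\nabla_b \rho\|_{L^2}$ and an $\ve^{1/2}$-weighted Young inequality against $\|\nabla_b^\perp \rho\|_{L^2}$, absorbing both gradient terms into the left and producing a net residual of order $C \ve \mu(N(\gamma, M))\|\Delta T_0\|_{L^\infty}^2$. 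Finally, the $\ve \|\rho\|_{L^2}^2$ term on the left of \eqref{mainestimate} follows from one more application of Poincaré: $\ve \|\rho\|_{L^2}^2 \leq C\ve(\|\nabla_b \rho\|_{L^2}^2 + \|\nabla_b^\perp \rho\|_{L^2}^2)\leq C(\|\nabla_b\rho\|_{L^2}^2 + \ve\|\nabla_b^\perp\rho\|_{L^2}^2)$ (using $\ve \leq 1$). The main subtlety is the calibration of the $\ve$-weights in Young's inequality so that the $\mu(N(\gamma, M))$ contribution enters with coefficient $\ve$ rather than $\ve^2$; routing the $\|\nabla_b^\perp \rho\|_{L^2}$ piece through an $\ve^{1/2}$-weighted Young inequality is precisely what produces the correct power.
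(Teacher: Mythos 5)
Your proposal is correct and follows essentially the same route as the paper: derive the equation $\div(b\nabla_b\rho)+\ve\div(\nabla_b^\perp\rho)=-\ve\Delta T_0$ with $\rho|_{S_\pm}=0$, test against $\rho$, verify that $\Delta T_0$ satisfies the mean-free condition \eqref{meanfree} via the defining ODE \eqref{effectivebc}, apply Lemma \ref{productest}, and close with Young and Poincar\'e. The only (immaterial) difference is bookkeeping: the paper absorbs the $\|\rho\|_{L^2}$ term with a $\delta$-weighted Young inequality followed by Poincar\'e, whereas you apply Poincar\'e first and calibrate the $\ve$-weights afterward; both yield the same $\ve\,\mu(N(\gamma,M))$ contribution.
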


\begin{proof}
	The remainder $\rho$ satisfies
	\begin{align}
	 \div (b\nabla_b \rho) + \ve \div(\nabla_b^\perp \rho)&=
	 - \ve \Delta T_0, \qquad \text{ in } D,\label{Reqnmix}\\
	 \rho|_{S_{\pm}} &= 0,
	 \label{}
	\end{align}
	where we wrote $\Delta T_0 = \div(\nabla_b^\perp T_0)$ since $b\cdot \nabla T_0 = 0$.
	If we multiply \eqref{Reqnmix} by $\rho$ and integrate over $D$, using the co-area
	formula \eqref{coarea} we find
		\begin{equation}
		 \int_D\left( |\nabla_b \rho|^2 + \ve |\nabla_b^\perp \rho|^2\,\right) \rmd \mu
		 = \ve \int_{D} \Delta T_0 \rho\, \rmd \mu
		 = \ve \int_{\psi_-}^{\psi_+} \int_{S_\psi} \frac{\Delta T_0}{|\nabla \psi|}
		 \rho\, \rmd \mathscr{H}^{(d-1)}  \rmd \psi .
		 \label{errortermmix}
		\end{equation}
		Recall that we have defined $T_0$ so that $F = \Delta T_0$ satisfies the condition \eqref{meanfree}.
		We can therefore apply Lemma \ref{productest}, and by \eqref{productinftymix} we have
			\begin{align}\nonumber
			\ve \bigg|\int_{\psi_0}^{\psi_1}\int_{S_\psi} \frac{\Delta T_0}{|\nabla \psi|}  \rho\, &\rmd \mathscr{H}^{(d-1)}  \rmd \psi \bigg|
			\\ \nonumber
			&\leq  M \ve \|\Delta T_0\|_{H^{(0, \gamma)}} \|\nabla_b \rho\|_{L^2} + \ve \mu(N(\gamma, M))^{1/2} \|\Delta T_0\|_{L^\infty}\|\rho\|_{L^2}\\
			&\leq \frac{1}{2}  \left( M^2 \ve^2 \|\Delta T_0\|_{H^{(0, \gamma)}}^2 + \frac{1}{\delta}
			\ve \mu(N(\gamma, M)) \|\Delta T_0\|_{L^\infty}^2 \right)
			+ \frac{1}{2} \|\nabla_b \rho\|_{L^2}^2 + \frac{\delta}{2} \ve \|\rho\|_{L^2}^2
			 \label{keystep}
			\end{align}
			for any $\delta > 0$.
			Since $\rho|_{\pa D} = 0$, by Poincar\'e's inequality we have
			\begin{equation}
			 \|\rho\|_{L^2}^2 \leq C_P \|\nabla \rho\|_{L^2}^2,
			 \label{poin}
			\end{equation}
			where $C_P$ is the Poincar\'e constant for $D$. Taking $\delta$ so that
			$\delta C_P$ is sufficiently small, we see
			from \eqref{errortermmix} and \eqref{keystep}
			that there is a constant $C > 0$
			depending only on $C_P$ so that
			\begin{equation}
			 \|\nabla_b \rho\|_{L^2}^2 + \ve \|\nabla_b^\perp \rho\|_{L^2}^2
			 +\ve \|\rho\|_{L^2}^2
			 \leq C \left(M^2 \ve^2 + \ve \mu(N(\gamma, M))\right) \left(\|\Delta T_0\|_{H^{(0, \gamma)}}^2 + \|\Delta T_0\|_{L^\infty}^2\right),
			 \label{keystep2}
			\end{equation}
			after using \eqref{poin} again to bound $\ve \|\rho\|_{L^2}^2
			\leq C' \|\nabla_b \rho\|_{L^2}^2 + C' \ve \|\nabla_b^\perp \rho\|_{L^2}^2$
			for another constant $C'$.

\end{proof}

\begin{proof}[Proof of Theorem \ref{mainmixthm}]
 	If we take $M = \ve^{-\frac{1}{2+c}}$, then writing $N_\ve = N(\gamma, \ve^{-\frac{1}{2+c}})$,
	 \eqref{mainestimate} gives
	\begin{align}\label{bounds1}
	  \|\nabla_b^\perp \rho\|_{L^2}
		+\|\rho\|_{L^2}^2
		&\leq C\ve^{\frac{c}{2+c}}\left(1  + \ve^{-\frac{c}{2+c}}\mu(N_\ve)\right)
		\left(\|\Delta T_0\|_{H^{(0, \gamma)}}^2 + \|\Delta T_0\|_{L^\infty}^2 \right),\\ \label{bounds2}
		\|\nabla_b \rho\|_{L^2} &\leq
		 C\ve^{1 + \frac{c}{2+c}}\left(1  + \ve^{-\frac{c}{2+c}}\mu(N_\ve)\right)
		\left(\|\Delta T_0\|_{H^{(0, \gamma)}}^2 + \|\Delta T_0\|_{L^\infty}^2 \right).
\end{align}
By assumption, $\lim_{\ve \to 0} \ve^{-\frac{c}{2+c}} \mu(N_\ve) = 0$ and the result follows.
\end{proof}

\section{Proof of Corollary \ref{2dcor}: The 2d case}
\label{2dsecmix}
	If $|B| > 0$ in $D$,
	$B$ is fibered by its
	streamfunction $\psi$, $B = \nabla^\perp\psi$.
	We bound
	\begin{equation}
	 \|u\|_{\dot{H}^0(S_\psi)}^2
	 = \sum_{k \in \mathbb{Z} \setminus 0}
	 |\widehat{u}(k)|^2
	 \leq
	 \sum_{k \in \mathbb{Z}}
	 |k|^2 |\widehat{u}(k)|^2
	 \leq \frac{C}{\inf_{S_\psi} |B|^2} \|\nabla_B u\|_{L^2(S_\psi)}^2
	 \label{}
	\end{equation}
	for a constant $C > 0$. 
	Here  we have used
	that $B$ spans the tangent space to $S_\psi$ at each point. Therefore,
	$E(0, M) = D$ whenever $M \geq \frac{C}{\inf_{D} |B|}$, and so
	$N(0, M)$ is empty in this case. Thus
	\eqref{measureergo} holds for any $c \geq 0$ and the result follows.

\section{Proof of Corollary \ref{3dcor}:  The 3d integrable case}
\label{3dsecmix}

We first show that if $\gamma > 2$, under the hypotheses of
Corollary \ref{3dcor}, the condition \eqref{measureergo} holds
with $c = 1$. We start by relating this condition to the ``Diophantine''
condition.

Let $I = [\psi_-,\psi_+]$. Fix $\iota = \iota(\psi)$ with $\iota \in L^\infty(I)$.
Let $ |(m, n)| = \sqrt{m^2 + n^2}$. We define
\begin{equation}
D(\gamma, M) = \left\{\psi \in I : |m + \iota(\psi)n| \geq \frac{1}{M |(m, n)|^\gamma}\quad \text{for all} \quad (m,n)\in \mathbb{Z}^2\setminus \{0\}
\right\}.
\label{diophantine}
\end{equation}
If $\psi \in D(\gamma, M)$ for some $\gamma, M$, we will say that $\psi$ is
``Diophantine'' and that the surface $S_\psi$ is a ``Diophantine surface''.
With $m + \iota(\psi)n$ replaced by $\omega \cdot (m, n)$ for $\omega \in \mathbb{R}^2$,
these sets play a fundamental role in the proof of the celebrated KAM theorem
\cite{arnold}.

At least when $\iota(\psi) = \psi$, these sets are empty if $\gamma < 2$
(see \cite{treschev}),
but it turns out
that if $\iota$ is bi-Lipschitz and $\gamma > 2$ they have positive measure;
in fact the complement of $\cup_{M > 0 } D(\gamma,M)$ has zero measure,
as the next result shows.
\begin{lemma}
	\label{diolem}
	Let $\iota : I \to\mathbb{R}$ be an invertible function
	and suppose there is $L > 0$ so that
	\begin{equation}
	 \frac{1}{L} |\psi_1 - \psi_2| \leq |\iota(\psi_1) - \iota(\psi_2)|
	 \leq L |\psi_1 - \psi_1|.
	 \label{bilipscitz}
	\end{equation}
 Define $D(\gamma, M)$ as in \eqref{diophantine} and let $\mu$ denote the
 one-dimensional Lebesgue measure.
 If $\gamma > 2$ and $M > 0$, there is a constant $K$ depending only on
 $\mu(I)$, $L$, and $1/(\gamma-2)$ so that
 \begin{equation}
  \mu(I \setminus D(\gamma, M)) \leq K \left( \frac{1}{M^{\frac{1}{1+\gamma}}} + \frac{1}{M}\right).
  \label{measureestimate}
 \end{equation}
\end{lemma}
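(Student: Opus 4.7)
The plan is a direct Borel--Cantelli style union bound. Setting
\[
B_{m,n} := \{\psi \in I : |m + \iota(\psi)n| < M^{-1}|(m,n)|^{-\gamma}\},\qquad (m,n) \in \mathbb{Z}^2 \setminus \{0\},
\]
one has $I \setminus D(\gamma, M) = \bigcup_{(m,n)\neq 0} B_{m,n}$; each $B_{m,n}$ is an $\iota$-preimage of a short interval, so the bi-Lipschitz hypothesis \eqref{bilipscitz} controls $\mu(B_{m,n})$ and summing produces the bound. The two terms on the right-hand side of \eqref{measureestimate} will come from the two natural cases $n = 0$ and $n \neq 0$.

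In the degenerate case $n = 0$, the quantity $|m + \iota(\psi)n| = |m|$ is independent of $\psi$, so $B_{m,0}$ is either empty or all of $I$, according to whether $M|m|^{1+\gamma} < 1$, i.e., $|m| < M^{-1/(1+\gamma)}$. The number of nonzero integers in this range is at most $2M^{-1/(1+\gamma)}$, so this case contributes at most $2\mu(I)M^{-1/(1+\gamma)}$, accounting for the first term in \eqref{measureestimate}. In the principal case $n \neq 0$, I rewrite the defining inequality as $|\iota(\psi) + m/n| < 1/(M|n|\,|(m,n)|^\gamma)$; since \eqref{bilipscitz} asserts that $\iota^{-1}$ is $L$-Lipschitz, this yields
\[
\mu(B_{m,n}) \leq \frac{2L}{M|n|\,|(m,n)|^\gamma} \leq \frac{2L}{M|(m,n)|^\gamma},
\]
where the second inequality uses $|n|\geq 1$. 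A dyadic decomposition of the annuli $\{R \leq |(m,n)| < 2R\}$, each carrying $O(R^2)$ lattice points, reduces the total lattice sum to a geometric series
\[
\sum_{(m,n) \neq 0} (m^2+n^2)^{-\gamma/2} \leq C \sum_{j \geq 0} 2^{j(2-\gamma)} \leq \frac{C'}{\gamma-2},
\]
which converges precisely for $\gamma > 2$ with constant of order $1/(\gamma-2)$. Combining these contributions yields \eqref{measureestimate} with $K = C\max(\mu(I),\, L/(\gamma-2))$.

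The main technical delicacy is simply controlling this lattice sum and its divergence as $\gamma \to 2^+$; the crude step $|n|\geq 1$ is what forces $\gamma > 2$ rather than $\gamma > 1$, but the sharper two-scale argument (retaining $1/|n|$ and using $\sum_m (m^2+n^2)^{-\gamma/2} \leq C|n|^{1-\gamma}$) would not improve matters for our purposes, since any $\gamma > 2$ already suffices to verify the ergodicity condition of Definition \ref{ergcond} through Corollary \ref{3dcor}.
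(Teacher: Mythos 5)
Your proof is correct and follows essentially the same route as the paper's: a union bound over the resonant sets $\Pi_{(m,n)}$, with the $n=0$ case contributing the $M^{-1/(1+\gamma)}$ term and the bi-Lipschitz hypothesis (i.e.\ the $L$-Lipschitz bound on $\iota^{-1}$) controlling each $n\neq 0$ set. The only cosmetic difference is in summing the resulting series: you discard the $1/|n|$ factor and use a dyadic count of lattice points, whereas the paper retains it and compares with an integral; both yield convergence exactly for $\gamma>2$ with a constant of order $1/(\gamma-2)$, so nothing is lost.
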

Taking $M \to \infty$ in \eqref{measureestimate} shows that the set of $\psi$
which fails the condition 
\eqref{diophantine} for \emph{all} $M > 0$ has zero measure.  Equivalently, the set of $\psi$ the condition in \eqref{diophantine}  for \emph{some} $M$ has full measure, though
in general the complement may be nonempty. 

\begin{proof}
	This result follows from a straightforward modification
	of the argument from e.g. \cite{treschev}. We include the details
	here for the convenience of the reader.
	For $(m, n) \in \mathbb{Z}^2 \setminus (0,0)$, we define
\begin{equation}
 \Pi_{(m, n)}(\gamma, M)
 = \left\{ \psi \in I : |m + \iota(\psi) n| < \frac{1}{M|(m,n)|^\gamma}\right\},
 \label{picond}
\end{equation}
so that
\begin{equation}
 I \setminus D(\gamma, M) \subseteq \bigcup_{(m, n) \not=(0,0)}
 \Pi_{(m, n)}(\gamma, M).
 \label{}
\end{equation}
If $n\not=0$, $\Pi_{(m, n)}(\gamma, M)$ is contained in the interval
$[\psi_1, \psi_2]\subset I$ where $\psi_{1}$ are such that $\iota(\psi_{1})
=  \frac{M}{n} \frac{1}{|(m, n)|^\gamma} + \frac{m}{n}$ and
$\iota(\psi_{2}) = -\iota(\psi_1)$
(such $\psi_1, \psi_2$ exist and are unique since by \eqref{bilipscitz} $\iota$ is invertible).  These are the maximal and minimal values of $\psi$ such that \eqref{picond} holds for a given $(m,n)$.
Therefore
\begin{equation}
	\mu(\Pi_{(m, n)}(\gamma, M))
	\leq |\psi_1 - \psi_2|
	\leq \frac{L }{2M |n| |(m, n)|^\gamma}.
 \label{}
\end{equation}
If $n = 0$, $\Pi_{(m, 0)}(\gamma, M) = I$
when $|m| < M^{-1/(1 + \gamma)}$ and it is empty otherwise. Therefore
\begin{multline}
 \sum_{(m, n)\not=(0,0)} \mu(\Pi_{(m, n)}(\gamma, M))
 \leq \sum_{m \not=0} \mu(\Pi_{(m,0)}(\gamma, M))
 + \sum_{ (m, n)\not=(0,0), n\not=0}
 \mu(\Pi_{(m,n)}(\gamma, M))\\
 \leq |I| \frac{1}{M^{1/(1+\gamma)}} + \frac{L}{M}
 + \frac{L}{2M}
 \sum_{|m|, |n| \geq 1} \frac{1}{|n| |(m, n)|^\gamma},
 \label{}
\end{multline}
since there are two terms in the second sum on the first line with $m = 0$.
The last term here is bounded by
\begin{multline}
 2 \sum_{m = 1}^\infty \sum_{n = 1}^\infty \frac{1}{n}
 \frac{1}{(m^2 + n^2)^{\gamma/2}}
 \leq 2 \sum_{m = 1}^\infty \int_1^\infty \frac{\rmd z}{z(m^2 + z^2)^{\gamma/2}}
 = 2 \sum_{m = 1}^\infty \frac{1}{m^{\gamma}} \int_{1/m}^\infty \frac{\rmd w}{w(1 + w^2)^{\gamma/2}}
 \\
 \leq 2\sum_{m = 1}^\infty \frac{1}{m^{\gamma - 1}}
 \int_0^\infty \frac{\rmd w}{(1+ w^2)^{\gamma/2}}
 \label{}
\end{multline}
which is finite if $\gamma > 2$. In this case we therefore have
\begin{equation}
 \mu\left(I \setminus D(\gamma, M)\right)
 \leq \mu\left({\bigcup}_{(m,n)\not=(0, 0)}
 \Pi_{m, n}(\gamma, M)\right)
 \leq C_1 \left(\frac{1}{M^{1/(1+\gamma)}} + \frac{1}{M}\right),
 \label{}
\end{equation}
for a constant $C_1$ depending only on $|I|$, $L$ and $1/(\gamma - 2)$,
which proves \eqref{nonintmeasureestimate}.
\end{proof}

The values of $\psi \in D(\gamma, M)$  are sometimes called ``strongly non-resonant'', and the surfaces $S_\psi$ with $\psi \in D(\gamma, M)$
are called ``non-resonant flux surfaces''.

\begin{proof}[Proof of Corollary \ref{3dcor}]
	We first consider a field of the form \eqref{Bformmix} when $\chi = \chi(\psi)$
	and where $\theta, \phi$ form a coordinate system on each $S_\psi$.
	With $\iota(\psi) = \chi'(\psi)$, in this setting we have
	\begin{equation}
	( B \cdot \nabla) u = J \left[ \pa_\phi  + \iota(\psi)\pa_\theta \right] u,
	J = \nabla \psi \times \nabla \theta \cdot \nabla \phi
	 \label{boperpf}
	\end{equation}
	where $ J \not= 0$ by assumption.
	We claim that the Diophantine surfaces are ergodic, in the sense that there
	is a constant $C$ with
\begin{equation}
 D(\gamma, C M) \subseteq E(\gamma, M)
 \label{contain}
\end{equation}
for any $M > 0$. It follows from this claim that $N(\gamma, M) \subseteq I \setminus D(\gamma, CM)$
and so the condition \eqref{measureergo} holds
for any $c < 1$,
since \eqref{measureestimate} then implies that
\begin{equation}
   \mu(N(\gamma, M)) \leq \frac{C}{M},
	 \qquad M \geq 1.
 \label{measureest}
\end{equation}

We now prove \eqref{contain}.
Whenever $\psi \in D(\gamma, M)$, for any smooth function $u:S_\psi \to \mathbb{R}$,
we have
\begin{equation}
	\|u \|_{\dot{H}^{-\gamma}(S_\psi)}^2
	= \sum_{(m, n) \in \mathbb{Z}^2\setminus 0}
 \frac{|\widehat{u}( m, n)|^2}{\left( m^2 + n^2\right)^\gamma}
	\leq M^2 \sum_{(m, n) \in \mathbb{Z}^2\setminus 0}
 |m + \iota(\psi) n|^2|\widehat{u}( m, n)|^2.
 \label{pointofgoodset}
\end{equation}
Now we note that by \eqref{boperpf},
\begin{equation}
 \widehat{\left(\nabla_{B/J} u\right)}(\psi, m, n)
 = -2\pi i(m + \iota(\psi) n) \widehat{u}(\psi, m, n).
 \label{fouriersln}
\end{equation}
It follows that if $\psi \in D(\gamma, M)$, there are constants $C_1, C_2$
so that
\begin{equation}
	\|u \|_{\dot{H}^{-\gamma}(S_\psi)}^2
	\leq C_1 M^2
	\| \nabla_{B/J} u \|_{L^2(S_\psi)}^2
	\leq C_2 M^2 \|\nabla_b u\|_{L^2(S_\psi)}^2,
 \label{}
\end{equation}
and this gives \eqref{contain}.

We now show how to get the same result for any non-vanishing divergence-free
fibered field provided the rotational transform satisfies the bound
\eqref{mixlip}. We first show that the rotational transform is well-defined in
this setting; that is, that we can find coordinates so that $B$
takes the form \eqref{introint}.

In light of \eqref{divrelation}, since $\div B = 0$, it follows that $\frac{1}{|\nabla \psi|}$
is an integral invariant of $B|_{S_{\psi}}$ (namely $U = \frac{1}{|\nabla \psi|}$
is a conserved density along the flow of $B|_{S_{\psi}}$ on $S_\psi$) and so by Sternberg's theorem (Theorem 1 of \cite{sternberg}), $B$ is orbitally conjugate to a linear operator on $S_\psi$. That is, there are coordinates
$(\theta, \phi)$ on $S_\psi$ so that in these coordinates,
there is a nonvanishing function $J = J(\theta, \phi)$ so that
on $S_\psi$, $B$ takes the form
\begin{equation}
 (B\cdot \nabla^T)u
 = J \left( \pa_\phi + \iota \pa_\theta \right) u, \qquad u \in C^\infty(S_\psi)
 \label{linsternberg}
\end{equation}
for a real number $\iota$ (compare with \eqref{Boperator}), where
$\nabla^T$ denotes the tangential gradient on $S_\psi$, given by
$\nabla^T u = (\nabla  - \frac{\nabla \psi}{|\nabla \psi|^2} \nabla\psi\cdot \nabla) u$
when $u$ is a function defined in a neighborhood of $S_\psi$.
Applying this theorem on each $S_\psi$ then gives $\iota = \iota(\psi)$.

The above proof goes through with a minor change, which is that we want
to replace the fractional Sobolev norm $\|u\|_{\dot{H}^{\gamma}}$,
which was defined relative to a fixed coordinate system (because the Fourier
coefficients $\widehat{u}(k)$ depend on the choice of coordinates
in \eqref{fseries}), with a fractional Sobolev norm $\|u\|_{\dot{\widetilde{H}}^{\gamma}}$
defined relative to the coordinates guaranteed by Sternberg's theorem.
This means we want to modify
the definition of $E(\gamma, M)$ and define
\begin{equation}
 \widetilde{E}(\gamma, M) =
 \left\{ \psi \in I : \| u\|_{\dot{\widetilde{H}}^{-\gamma}(S_{\psi})}
 \leq M \|\nabla_B u\|_{L^2(S_{\psi})}, \text{ for all } u \in H^1(S_{\psi})\right\}.
 \label{mixingsetstern}
\end{equation}
It is clear that the proof of Theorem \ref{mainmixthm} goes through without change
if we replace the sets $E(\gamma, M)$ with $\widetilde{E}(\gamma, M)$.
It is also clear from the formula \eqref{linsternberg} and the above
argument that there is a constant $C > 0$ so that
\eqref{contain} holds with $E$ replaced with $\widetilde{E}$, if the rotational transform
$\iota = \iota(\psi)$ from \eqref{linsternberg} satisfies \eqref{mixlip}.

\end{proof}

\section{Proof of  Theorem \ref{3dcornonint}, Corollary \ref{3dcornonintspt} and Proposition \ref{propconj}:  the non-integrable case}
\label{3dnonintsec}
We now consider the non-integrable case $\chi_1\not=0$ of \eqref{perturbchi}.
With $B$ as in
\eqref{Bformmix}-\eqref{perturbchi}, we set
\begin{equation}
B_0 = \nabla \psi \times \nabla \theta + \iota(\psi) \nabla \phi \times \nabla \psi,
\qquad
B_1 =   \nabla \phi \times \nabla \chi_1
\label{}
\end{equation}
where $\iota(\psi) = \chi_0'(\psi)$. We then write $b = B/|B|$ in the form
\begin{equation}
b = b_0 + \ve^a b_1, \qquad
b_0 = \frac{B_0}{|B|}, \quad b_1 = \frac{B_1}{|B|}.
\label{}
\end{equation}

\subsection{Proof of Theorem \ref{3dcornonint}.}
We start by recording a simple estimate.
\begin{lemma}
 If $\|\nabla \chi_1\|_{L^\infty} < 1$ and
 $a \geq 1/2$,
 there is a constant $C > 0$ so that for any function $u \in H^1(D)$, we have
 \begin{equation}
	\frac{1}{C}\left(\|\nabla_{b} u\|_{L^2}^2 + \ve \|\nabla_{b}^\perp u\|_{L^2}^2\right)
	\leq \|\nabla_{b_0} u\|_{L^2}^2 + \ve \|\nabla_{b_0}^\perp u\|_{L^2}^2
	\leq C \left(\|\nabla_{b} u\|_{L^2}^2 + \ve \|\nabla_{b}^\perp u\|_{L^2}^2\right).
	\label{b0tob}
 \end{equation}
\end{lemma}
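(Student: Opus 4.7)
The plan is to exploit that $b$ differs from $b_0$ only by $\ve^a b_1$, combined with the fact that $a \geq 1/2$ gives $\ve^{2a} \leq \ve$, so that cross terms of size $\ve^{2a}$ can be absorbed into the $\ve$-weighted perpendicular norm (or into the full parallel norm). First I would record two pointwise facts that follow from the assumption $\|\nabla \chi_1\|_{L^\infty} < 1$ together with the nonvanishing of $B_0$: namely, for $\ve$ smaller than some absolute constant, the magnitude $|B| = |B_0 + \ve^a B_1|$ is comparable to $|B_0|$, hence there are uniform bounds $c \leq |b_0| \leq C$ and $|b_1| \leq C$. (For $\ve$ bounded away from $0$, the inequality \eqref{b0tob} is trivial since all four quantities are equivalent to $\|\nabla u\|_{L^2}^2$.)

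Next I would write down the algebraic identities linking the two decompositions. Since $b = b_0 + \ve^a b_1$, one has
\begin{equation}
\nabla_b u = \nabla_{b_0} u + \ve^a \nabla_{b_1} u,
\end{equation}
and a direct computation from $\nabla_b^\perp u = \nabla u - b \nabla_b u$ gives
\begin{equation}
\nabla_b^\perp u = \nabla_{b_0}^\perp u - \ve^a b_1 \nabla_{b_0} u - \ve^a b_0 \nabla_{b_1} u - \ve^{2a} b_1 \nabla_{b_1} u.
\end{equation}
The term $\nabla_{b_1} u = b_1 \cdot \nabla u$ is controlled pointwise by $|b_1|\, |\nabla u|$, and the defining relation $\nabla_{b_0}^\perp u = \nabla u - b_0 \nabla_{b_0} u$ yields $|\nabla u|^2 \leq 2|\nabla_{b_0}^\perp u|^2 + 2|b_0|^2 |\nabla_{b_0} u|^2$, so $|\nabla_{b_1} u|^2$ is controlled by a constant times $|\nabla_{b_0}^\perp u|^2 + |\nabla_{b_0} u|^2$.

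Substituting these pointwise bounds into the two identities, squaring and integrating over $D$, one obtains for some constant $C$
\begin{align}
\|\nabla_b u\|_{L^2}^2 &\leq C\bigl(\|\nabla_{b_0} u\|_{L^2}^2 + \ve^{2a}(\|\nabla_{b_0} u\|_{L^2}^2 + \|\nabla_{b_0}^\perp u\|_{L^2}^2)\bigr),\\
\|\nabla_b^\perp u\|_{L^2}^2 &\leq C\bigl(\|\nabla_{b_0}^\perp u\|_{L^2}^2 + \ve^{2a}(\|\nabla_{b_0} u\|_{L^2}^2 + \|\nabla_{b_0}^\perp u\|_{L^2}^2)\bigr).
\end{align}
Using $\ve^{2a} \leq \ve \leq 1$ (the first by $a \geq 1/2$, the second in the relevant small-$\ve$ regime), the cross terms absorb into $\|\nabla_{b_0} u\|_{L^2}^2 + \ve \|\nabla_{b_0}^\perp u\|_{L^2}^2$, yielding the right-hand inequality of \eqref{b0tob}. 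The reverse inequality follows by the same computation after solving the identities in the other direction, i.e.\ writing $\nabla_{b_0} u = \nabla_b u - \ve^a \nabla_{b_1} u$ and analogously for the perpendicular part, exploiting that $b$ is a unit vector so that $|\nabla u|^2 = |\nabla_b u|^2 + |\nabla_b^\perp u|^2$.

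There is no real obstacle here; the argument is bookkeeping with the triangle inequality. The only point requiring minor care is that $b_0$ is \emph{not} a unit vector, which is why one must separately verify the uniform bounds on $|b_0|$ from below and above; these come directly from $\|\nabla \chi_1\|_{L^\infty} < 1$ and the nondegeneracy of $B_0$.
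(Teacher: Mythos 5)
Your proof is correct and follows essentially the same route as the paper: expand $b = b_0 + \ve^a b_1$, bound $|\nabla_{b_1} u| \leq C|\nabla \chi_1|\,|\nabla u|$ pointwise, recover $|\nabla u|^2$ from the orthogonal decomposition $|\nabla_b u|^2 + |\nabla_b^\perp u|^2$ (or its $b_0$ analogue), and absorb the $\ve^{2a}\|\nabla\chi_1\|_{L^\infty}^2\|\nabla u\|_{L^2}^2$ error using $a \geq 1/2$ and $\|\nabla\chi_1\|_{L^\infty} < 1$. Your extra care about $b_0$ not being a unit vector and the trivial large-$\ve$ regime is sound but not a departure from the paper's argument.
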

	\begin{proof}

	This follows after writing $\nabla_{b_0} = \nabla_{b} - \ve^a \nabla_{b_1}$,
	noting that $|b_1| \leq C|\nabla \chi_1|$,
	and
	\begin{equation}
	 |\nabla_{b_0} u| \leq |\nabla_{b} u| + \ve^a |\nabla \chi_1| |\nabla u|,
	 \qquad
	 |\nabla_{b_0}^\perp u|
	 = |(\nabla - b_0 \nabla_{b_0})u|
	 \leq |\nabla_b^\perp u|
	 + \ve^a |\nabla \chi_1| |\nabla u|
	 \label{}
	\end{equation}
	for smooth $u$. It follows that for $u \in H^1(D)$,
	\begin{equation}
	 \|\nabla_{b_0} u\|_{L^2}^2 + \ve \|\nabla_{b_0}^\perp u\|_{L^2}^2
	 \leq \|\nabla_b u\|_{L^2}^2 + \ve \|\nabla_b^\perp u\|_{L^2}^2
	 + \ve^{2a}\|\nabla \chi_1\|_{L^\infty}^2 \|\nabla u\|_{L^2}^2.
	 \label{}
	\end{equation}
	Provided $2a \geq 1$, this gives
	\begin{equation}
	 (1 - \ve \|\nabla \chi_1\|_{L^\infty})\|\nabla_{b_0} u\|_{L^2}^2 + \ve(1 - \|\nabla \chi_1\|_{L^\infty}^2) \|\nabla_{b_0}^\perp u\|_{L^2}^2
	 \leq \|\nabla_b u\|_{L^2}^2 + \ve \|\nabla_b^\perp u\|_{L^2}^2,
	 \label{}
	\end{equation}
	which is the second bound in \eqref{b0tob}. The first bound is nearly
	identical.
	\end{proof}

\begin{proof}[Proof of Corollary \ref{3dcornonint}]

Since $\nabla_{b_0} T_0 = 0$ we have
\begin{multline}
 \div(b \nabla_b T_0) + \ve \div(\nabla_b^\perp T_0)
 = \ve^a \div(b \nabla_{b_1} T_0) + \ve \Delta T_0
 -\ve \div( b \nabla_b T_0)\\
 =
 (\ve^a - \ve^{1+a}) \div(b \nabla_{b_1} T_0)
 + \ve \div (\nabla T_0).
 \label{}
\end{multline}
Then $\rho = T_\ve - T_0$ satisfies
\begin{align}
 \div(b \nabla_b \rho) + \ve \div(\nabla_b^\perp \rho)
 &=
 (\ve^{1+a} - \ve^{a}) \div(b \nabla_{b_1} T_0)
 - \ve \div (\nabla T_0)\qquad \text{ in } D,\label{Reqnmixnonint}\\
 \rho|_{S_{\pm}} &= 0.
 \label{}
\end{align}
Since $\pa_\theta \chi_1|_{\pa D} = 0$ by assumption, $b$ is tangent to
$\pa D$ and so if we multiply this by $\rho$ and integrate over $D$, we find
\begin{equation}
 \int_D |\nabla_b \rho|^2 + \ve |\nabla_{b}^\perp \rho|^2\, \rmd \mu
 = \ve \int_D \Delta T_0 \rho \, \rmd \mu
 + (\ve^a - \ve^{1+a})
 \int_D \nabla_{b_1}T_0 \nabla_b \rho\, \rmd \mu  ,
 \label{}
\end{equation}
after integrating by parts in the second term on the right-hand side.
Since $|\nabla_{b_1} T_0| \leq |\pa_\theta \chi_1| |T_0'|$,
for any $\delta > 0$ we have
\begin{equation}
 \|\nabla_{b} \rho\|_{L^2}^2 + \ve \|\nabla_{b}^\perp \rho\|_{L^2}^2
 \leq  \frac{\ve^{2a}}{2\delta}\|\pa_\theta \chi_1\|_{L^2}^2 \|T_0'\|_{L^\infty}^2
 + \frac{\delta}{2} \|\nabla_{b}\rho\|_{L^2}^2
 + \ve \left| \int_{\psi_-}^{\psi_+} \int_{S_\psi} \frac{\div \nabla T_0}{|\nabla \psi|}
  \rho\, \rmd \mathscr{H}^{(d-1)}  \rmd \psi \right|.
 \label{}
\end{equation}
Arguing as in \eqref{keystep}-\eqref{keystep2}
and using \eqref{b0tob}, this implies that there is a constant $C > 0$ so that
for any $M > 0$,
\begin{multline}
 \|\nabla_{b_0} \rho\|_{L^2}^2 + \ve \|\nabla_{b_0}^\perp\rho\|_{L^2}^2
 + \ve \|\rho\|_{L^2}^2\\
 \leq C \ve^{2a}\|\pa_\theta \chi_1\|_{L^2}^2 \|T_0'\|_{L^2}^2
 + C\left(M^2 \ve^2 + \ve \mu(N(\gamma, M))\right) \left(\|\Delta T_0\|_{H^{(0, \gamma)}}^2 + \|\Delta T_0\|_{L^\infty}^2\right).
 \label{}
\end{multline}
If we take $M = \ve^{-1/3}$ and use the estimate \eqref{measureest}
for $\mu(N(\gamma, M))$, we find
	\begin{align}
	   \|\nabla_{b_0}^\perp \rho\|_{L^2}
		 +\|\rho\|_{L^2}^2
		&\leq
		C \ve^{2a-1} \|\pa_\theta \chi_1\|_{L^2}^2
		+
		C\ve^{\frac{1}{3}}
		\left(\|\Delta T_0\|_{H^{(0, \gamma)}}^2 + \|\Delta T_0\|_{L^\infty}^2 \right),\\
		\|\nabla_{b_0} \rho\|_{L^2} &\leq
		C \ve^{2a} \|\pa_\theta \chi_1\|_{L^2}^2
		+
		 C\ve^{\frac{4}{3}}
		\left(\|\Delta T_0\|_{H^{(0, \gamma)}}^2 + \|\Delta T_0\|_{L^\infty}^2 \right).
\end{align}
\end{proof}

\subsection{Proof of Corollary \ref{3dcornonintspt}.}

Before proving Corollary \ref{3dcornonintspt}, we collect some preliminary results.
First, from the uniform bounds \eqref{3dnonintest} and \eqref{3dnonintest2},
it follows that that the sequence $T_\ve - T_0$ has weak limit
$\res$ in $H^1$ and that $\nabla_b (T_\ve - T_0)$ converges strongly to
$0$ in $L^2$.

If we knew that $\nabla_B \res$ was smooth, it would follow that
$\nabla_B \res = 0$ everywhere. We however only know that $\nabla_B \res$
is in $L^2$ and in particular the restriction $\nabla_B \res|_{S_\psi}$
need not be defined. The following result shows that $\nabla_B \res|_{S_\psi} = 0$
in a weak sense.
\begin{lemma}
 Let $\res = \lim_{\ve \to 0} T_\ve - T_0$. For any
 $\psi$ and any $v \in C^2(S_\psi)$, with $B' = B|\nabla \psi|^{-1}$,
 \begin{equation}
   \int_{S_\psi} \res \div_{S_\psi}(B' v)\, \rmd \mathscr{H}^{(d-1)}  = 0.
  \label{weaktrace}
 \end{equation}
\end{lemma}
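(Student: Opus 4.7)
The proof plan is a duality argument exploiting the two convergences already at hand: $\rho_\ve \to \res$ strongly in $L^2(D)$ (from weak $H^1$ convergence plus Rellich--Kondrachov), and $\nabla_B \rho_\ve = |B|\,\nabla_b \rho_\ve \to 0$ strongly in $L^2(D)$ (from the bound on $\|\nabla_{b_0}\rho_\ve\|_{L^2}$ given by Theorem \ref{3dcornonint} with $a = 1/2$, combined with the comparability \eqref{b0tob}). The goal is to pair a suitable test function concentrated on the torus $S_{\psi_0}$ against $\nabla_B$ in order to produce a statement on a single surface.

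First, I would establish the global identity
\[
\int_D \res\, \nabla_B w \, \rmd \mu = 0 \qquad \text{for all } w \in H^1(D).
\]
Indeed, for each $\ve > 0$, since $\div B = 0$, $B$ is tangent to $\pa D$, and $\rho_\ve|_{\pa D} = 0$, integration by parts gives
\[
\int_D (\nabla_B \rho_\ve)\, w \, \rmd \mu = -\int_D \rho_\ve\, \nabla_B w \, \rmd \mu.
\]
Sending $\ve \to 0$, the left-hand side vanishes by strong convergence of $\nabla_B \rho_\ve$ to $0$, while the right-hand side converges to $-\int_D \res\,\nabla_B w\,\rmd\mu$ by strong convergence of $\rho_\ve$ to $\res$.

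To localize to $S_{\psi_0}$, I would extend the given $v \in C^2(S_{\psi_0})$ to a smooth $\tilde v$ in a tubular neighborhood by declaring it constant in the transverse coordinate $\psi$, and choose the test function $w_\delta(x) = \tilde v(x)\, \eta_\delta(\psi(x) - \psi_0)$ with $\eta_\delta$ a smooth approximate identity supported near $0$. Because $B \cdot \nabla \psi = 0$, the derivative that lands on $\eta_\delta$ vanishes, so $\nabla_B w_\delta = \eta_\delta(\psi - \psi_0)\,\nabla_B \tilde v$. Combining the co-area formula with the identity $\nabla_B \tilde v/|\nabla\psi| = \nabla_{B'}\tilde v = \div_{S_s}(B' \tilde v)$ (the last step using Lemma \ref{lemdiv} applied to $u\equiv 1$, which yields $\div_{S_s}(B') = 0$), one rewrites the identity of the preceding paragraph as
\[
0 = \int_D \res\, \nabla_B w_\delta \, \rmd\mu = \int_{\psi_-}^{\psi_+} \eta_\delta(s - \psi_0)\, f(s)\, \rmd s, \qquad f(s) := \int_{S_s} \res\,\div_{S_s}(B'\tilde v)\,\rmd\mathscr{H}^{(d-1)}.
\]

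The last step is to pass to the limit $\delta \to 0$ and conclude $f(\psi_0) = 0$, which reduces to verifying continuity of $f$ at $s = \psi_0$. This in turn follows from the fact that $s \mapsto \res|_{S_s}$ is continuous from $[\psi_-,\psi_+]$ into $L^2(S_s)$ — a standard consequence of $\res \in H^1(D)$ via the absolute-continuity-on-lines characterization of Sobolev functions, after a change of variables to the $(\psi,\theta,\phi)$ coordinate system — combined with the smoothness of $\div_{S_s}(B'\tilde v)$ in $s$. Once this continuity is in place, approximating the Dirac mass gives $f(\psi_0) = 0$, which is \eqref{weaktrace}. The main technical obstacle is precisely this trace-continuity step: one must handle with care the fact that $\res$ is only an $H^1$ function, so that its restrictions to the level sets $S_s$ are a priori defined only as $H^{1/2}$ traces, and verify that they depend continuously on the level — this is the only place where the limited regularity of $\res$ plays a delicate role.
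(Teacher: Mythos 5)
Your proof is correct, but it localizes to the surface by a genuinely different mechanism than the paper's. The paper fixes $S_{\psi'}$ and applies the divergence theorem on the sub-domain $D\cap\{\psi\le\psi'\}$ with a sharp cutoff $Q_\delta$ supported in $|\psi-\psi'|\le 2\delta$: the surface integral becomes two volume integrals, one of which is $O(\delta^{1/2})$ (an $H^1$-pairing over a slab of width $\delta$) and the other of which vanishes identically after an integration by parts along $B$ that exposes $\nabla_B\res=0$; letting $\delta\to 0$ gives \eqref{weaktrace} using only the existence of the $H^1$ trace on that one surface. You instead first prove the global weak identity $\int_D\res\,\nabla_B w\,\rmd\mu=0$ for all $w\in H^1(D)$ (the same integration by parts, performed at fixed $\ve$ and passed to the limit via Rellich and the strong convergence $\nabla_B\rho_\ve\to 0$), then localize by testing against $\tilde v\,\eta_\delta(\psi-\psi_0)$ and invoking the co-area formula together with \eqref{divrelation}; to evaluate the resulting $\psi$-average at the specific level $\psi_0$ you need continuity of $s\mapsto\res|_{S_s}$ into $L^2(S_s)$, which indeed follows from the embedding $H^1\bigl((\psi_-,\psi_+);L^2\bigr)\hookrightarrow C\bigl([\psi_-,\psi_+];L^2\bigr)$ in adapted coordinates. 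That extra ingredient is standard, and you correctly flag it as the delicate step; it is exactly what the paper's sharp-cutoff argument sidesteps, at the price of having to estimate a boundary-layer term. Both routes deliver the conclusion for \emph{every} $\psi$ rather than almost every $\psi$, which is what Corollary \ref{3dcornonintspt} needs. One caveat common to both arguments: the field in \eqref{weaktrace} must be read as the integrable part $B_0$, so that $B\cdot\nabla\psi=0$ and $\div B=0$ hold exactly and \eqref{3dnonintest2} controls the relevant directional derivative; your commutation $\nabla_B\bigl(\tilde v\,\eta_\delta(\psi-\psi_0)\bigr)=\eta_\delta(\psi-\psi_0)\,\nabla_B\tilde v$ relies on this.
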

\begin{remark}
 The statement of \eqref{weaktrace} holds with $B'$ replaced by $B$
 (or indeed any multiple of $B$) but it is more convenient for our purposes
 to use $B'$ since $\div_{S_\psi}(B' v) = |\nabla \psi| \div(B v) = |\nabla \psi| \nabla_Bv$,
 by \eqref{divrelation} and the fact that $\div B = 0$. See   Lemma \ref{MDElem}.
\end{remark}
\begin{proof}
	The idea is to use Stokes' theorem to write \eqref{weaktrace} in
	terms of an integral in the interior of $D$ and to integrate by parts
	in the interior to exploit the fact that $\nabla_B \res =0$ almost everywhere.

 Fix a surface $S_{\psi'}$.
 Define a cutoff function
$Q \in C^\infty(\mathbb{R})$ with $Q(z) = 1$ when $|z| \leq 1$ and $Q(z) = 0$
when $|z| > 2$. Define $Q_\delta(\psi) = Q( (\psi - \psi')/\delta)$, so that
$Q_\delta(\psi)$ vanishes when $|\psi - \psi'| < 2\delta$ and
$Q_\delta(\psi) \equiv 1$ when $|\psi - \psi'| \leq \delta$. Let
$D_-(\psi') = D \cap\{\psi \leq \psi'\}$. Then the boundary of $D_-(\psi')$
is $S_{\psi'} \cup S_{\psi_-}$, and if $\delta$ is sufficiently small, $Q_\delta$
vanishes identically on $S_{\psi_-}$.

Let $V$ denote the constant extension of $v$ to $D_-(\psi')$,
$V(\psi, \theta, \phi) = v(\psi', \theta, \phi)$ for all $(\psi, \theta, \phi)
\in D_-(\psi')$.
By Stokes' theorem, since the outer unit normal to $S_{\psi'}$ is
$n =\frac{\nabla \psi}{|\nabla \psi|}$, we have
\begin{multline}
 \int_{S_{\psi'}} \res \div_{S_{\psi'}}(B'u)\,  \rmd \mathscr{H}^{(d-1)} =
 \int_{S_{\psi'}} (n\cdot n)\, \res \div_{S_{\psi'}} (B' V) Q_\delta  \,  \rmd \mathscr{H}^{(d-1)}\\ =
 \int_{D_-} Q_\delta \div \left( \res \div_{S_{\psi'}} (B' V) \frac{\nabla \psi}{|\nabla \psi|}\right)
 \rmd \mu
 + \int_{D_-}  \res \div_{S_{\psi'}} (B' V)\, \frac{\nabla \psi}{|\nabla \psi|} \cdot (\nabla Q_\delta)\, \rmd \mu  .
 \label{trace1}
\end{multline}
The first term here is bounded by
\begin{equation}
 \| \div_{S_{\psi'}} (B' V)\|_{H^1( |\psi -\psi'| \leq \delta)} \|\res \|_{H^1(|\psi-\psi'|\leq \delta)}
 \leq C \delta^{1/2}\|v\|_{C^2(S_{\psi'})} \|\res\|_{H^1(D)}.
 \label{}
\end{equation}
As for the second term, we use the identity \eqref{divrelation} to write
\begin{equation}
 \div_{S_\psi}( B' V) = \frac{1}{|\nabla \psi|} \div(B V),
 \label{}
\end{equation}
and since $\frac{\nabla \psi}{|\nabla \psi|} \cdot (\nabla Q_\delta) = \frac{1}{\delta}|\nabla \psi| Q_\delta'$
and $b$ is tangent to $S_{\psi'}$,
the second term in \eqref{trace1} is
\begin{equation}
 \frac{1}{\delta} \int_{D_-}  \res \div_{S_{\psi'}} (B' V)\, |\nabla \psi| Q_\delta'\, \rmd \mu
  = \frac{1}{\delta}\int_{D_-} \res \div(B  V) Q_\delta'\, \rmd \mu
	 =-\frac{1}{\delta}\int_{D_-} \nabla_B (\res Q_{\delta}') V \, \rmd \mu  ,
 \label{}
\end{equation}
after integrating by parts.
We have therefore shown that for any $\delta > 0$,
\begin{multline}
 \left| \int_{S_{\psi'}} \res \div_{S_{\psi}} (B' V)\,  \rmd \mathscr{H}^{(d-1)}\right|
 \leq C \delta^{1/2}\|v\|_{C^2(S_{\psi'})} \|\res\|_{H^1(D)}
 + \frac{1}{\delta} C \left(\|\nabla_B \res\|_{L^2} + \|\nabla_B Q_\delta'\|_{L^2}\right) \|V\|_{L^2}
 \\
 = C \delta^{1/2}\|v\|_{C^2(S_{\psi'})} \|\res\|_{H^1(D)},
\end{multline}
where we used that $\nabla_B u = 0$ whenever $u = u(\psi)$ and that $\nabla_B \res = 0$
in $L^2$. Taking $\delta \to 0$ gives the claim.
\end{proof}
The condition \eqref{weaktrace} nearly says that $\res = 0$ on each $S_\psi$
in the sense of distributions, but in order to conclude this we would need to
know that any test function $v$ can be written in the form
$v = \div_{S_\psi} (B w)$ for some test function $w$. This need not be possible
on an arbitrary surface $S_{\psi}$, but by the next Lemma it is possible
provided $S_\psi$ is a Diophantine surface. We set
\begin{equation}
 D(\gamma) = \bigcup_{M > 0} D(\gamma, M).
 \label{fulldio}
\end{equation}
Note that by \eqref{contain}, $D(\gamma) \subseteq E(\gamma)$ where
$E(\gamma) = \cup_{M > 0} E(\gamma, M)$ denotes the collection of all ergodic
values of $\psi$. Note also that by Lemma \ref{diolem}, the complement
$I \setminus D(\gamma)$ has zero measure when $\gamma > 2$.
\begin{lemma}
	\label{MDElem}
	Fix $\gamma > 2$ and define $D(\gamma)$ as in \eqref{fulldio}.
	If $\psi \in D(\gamma)$ and $v \in H^{s+\gamma}(S_{\psi})$ for some
	$s \geq 0$, there is
	$w \in H^{s}(S_{\psi})$ satisfying
	\begin{equation}
	 \div_{S_\psi}(B'w) = v,
	 \qquad B' = B |\nabla \psi|^{-1}.
	 \label{MDE}
	\end{equation}
\end{lemma}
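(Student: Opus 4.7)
The plan is to invoke Sternberg's theorem to straighten $B|_{S_\psi}$ into a constant-coefficient transport operator on the two-torus, solve the resulting equation by Fourier series, and convert the Diophantine lower bound for $|n + \iota m|$ into exactly a $\gamma$-derivative loss. First, since $\div B = 0$ and $B$ is tangent to $S_\psi$, the identity $\div_{S_\psi}(B' w) = |\nabla \psi|^{-1}\nabla_B w$ noted in the remark preceding this lemma recasts \eqref{MDE} as $\nabla_B w = |\nabla \psi|\, v$ on $S_\psi$. Applying Sternberg's theorem exactly as in the proof of Corollary \ref{3dcor}, there are coordinates $(\theta, \phi)$ on $S_\psi$ and a smooth positive function $J = J(\theta, \phi)$ such that $\nabla_B u = J(\pa_\phi + \iota(\psi)\pa_\theta) u$. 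Setting $g := |\nabla \psi|\, v / J$, the equation becomes the constant-coefficient torus equation
\be
(\pa_\phi + \iota\, \pa_\theta)\, w = g,
\ee
and smoothness and positivity of $J$ on the compact $S_\psi$ yield $\|g\|_{H^{s+\gamma}(S_\psi)} \leq C(\psi)\|v\|_{H^{s+\gamma}(S_\psi)}$.

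Next I would Fourier expand in $(\theta, \phi)$. Integrating the equation over the torus forces $\widehat g(0, 0) = 0$; I read this compatibility condition as implicit in the hypothesis on $v$, since \eqref{MDE} itself forces $\int_{S_\psi} v\, \rmd \mathscr{H}^{(d-1)} = 0$ by the divergence theorem on a closed surface. The unique mean-free formal solution is
\be
\widehat w(m, n) = \frac{\widehat g(m, n)}{i(n + \iota m)}, \qquad (m, n) \in \mathbb{Z}^2 \setminus \{(0, 0)\}.
\ee
Since $\psi \in D(\gamma)$, we may fix $M > 0$ with $|n + \iota m| \geq 1/(M|(m, n)|^\gamma)$ for every nonzero $(m, n)$, and Plancherel then yields
\be
\|w\|_{H^s(S_\psi)}^2 \;\leq\; M^2 \sum_{(m, n) \neq 0}(1 + |(m, n)|^2)^{s}\,|(m, n)|^{2\gamma}\, |\widehat g(m, n)|^2 \;\leq\; C M^2 \|g\|_{H^{s+\gamma}(S_\psi)}^2 < \infty,
\ee
so the Fourier series converges in $H^s(S_\psi)$ to a genuine $w$ solving the transported equation distributionally, hence strongly upon reversing the Sternberg change of variables.

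The main obstacle is purely the small-divisor step controlling the single Fourier series above; this is handled cleanly by the Diophantine hypothesis and is the only place the threshold $\gamma > 2$ ultimately enters (via Lemma \ref{diolem}, which guarantees that almost every $\psi$ lies in $D(\gamma)$). Secondary subtleties are the zero-mode compatibility just noted, and the fact that the Sobolev norm computed through Sternberg's coordinates differs from any intrinsic norm on $S_\psi$ only by a smooth diffeomorphism of a compact surface, and so can be absorbed into the $\psi$-dependent constant. In the application to Corollary \ref{3dcornonintspt}, this lemma will be used to build, for each mean-free test function $\varphi \in C^\infty(S_\psi)$ with $\psi \in D(\gamma)$, a preimage $w$ under $\div_{S_\psi}(B' \cdot)$; combined with \eqref{weaktrace} this forces $\res|_{S_\psi}$ to be orthogonal to all such $\varphi$, hence constant on $S_\psi$, with that constant then ruled out by the boundary condition $\res|_{\pa D} = 0$.
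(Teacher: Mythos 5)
Your proof is correct and follows essentially the same route as the paper's: reduce \eqref{MDE} to $\nabla_B w = |\nabla\psi|\,v$ via \eqref{divrelation}, straighten $B|_{S_\psi}$ into $J(\pa_\phi+\iota\pa_\theta)$, and divide Fourier coefficients by $i(m+\iota n)$, with the Diophantine bound converting the small divisors into a loss of $\gamma$ derivatives. Your explicit handling of the zero-mode compatibility condition $\int_{S_\psi} v\,\rmd\mathscr{H}^{(d-1)}=0$ is a welcome addition that the paper's statement and proof leave implicit.
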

\begin{remark}
 The equation \eqref{MDE} is sometimes known as the ``magnetic
 differential equation''.   Lemma \ref{MDElem} simply says that you can solve this equation on good (sufficiently ergodic) flux surfaces.
\end{remark}
\begin{proof}
 We start by using \eqref{divrelation} to write
 \begin{equation}
  \div_{S_\psi}(B' w) = \frac{1}{|\nabla \psi|} \div(B w) = \frac{1}{|\nabla \psi|}
	\nabla_B w,
  \label{}
 \end{equation}
 and so \eqref{MDE} takes the form
 \begin{equation}
  \nabla_{B/J} w = V, \qquad V = J |\nabla \psi | v,
  \label{}
 \end{equation}
 where recall $J = |g|^{-1/2} =
 |(\nabla \psi \times \nabla \theta) \cdot \nabla \phi|$. We now define
 \begin{equation}
  \widehat{w}(m, n) = \frac{i}{2\pi} \frac{1}{m + \iota(\psi) n}
	\widehat{V}(m, n).
  \label{}
 \end{equation}
 Because $\psi \in D(\gamma)$, for some $M > 0$, we have the bound
 \begin{equation}
  |\widehat{w}(m, n)|^2  \leq M^2 \left( m^2 + n^2\right)^{\gamma} |\widehat{V}(m, n)|^2,
  \label{}
 \end{equation}
 so that in particular $w \in H^s(S_\psi)$ whenever $v = V/(J|\nabla \psi|)
 \in H^{s+\gamma}(S_\psi)$.  By the identity \eqref{fouriersln}, it follows that
 $w$ satisfies \eqref{MDE}.
\end{proof}

\begin{proof}[Proof of Corollary \ref{3dcornonintspt}]
	Fix $\gamma > 2$ and define $D(\gamma)$ as in \eqref{fulldio}.
	It follows from Lemma \ref{MDElem} that given $v \in C^\infty(S_\psi)$,
	there is $w \in C^\infty(S_\psi)$  so that
	with $\div_{S_\psi}(B |\nabla \psi| w) = v$. It then follows from
	Lemma \ref{weaktrace} that
	\begin{equation}
	 \int_{S_\psi} \res v \, \rmd \mathscr{H}^{(d-1)}
	 =
	 \int_{S_\psi} \res \div_{S_\psi}(B |\nabla \psi| w) \, \rmd \mathscr{H}^{(d-1)}
	 = 0,
	 \label{}
	\end{equation}
	as required.
\end{proof}

\subsection{Proof of Proposition \ref{propconj}: effective volume of non-integrability}

	It follows immediately from the equation \eqref{effectivebc}
	(or, equivalently, \eqref{limitTheta}) for $T_0 = \Theta(\psi)$ that either
	$T_0$ is constant or else $T_0'$ is nonvanishing.
	Since $T_0|_{S_+} = T_{+}\not=T_- = T_0|_{S_-}$ we
	have that $T_0'$ is nonvanishing and in particular $T_0'$ is bounded below.
  With $\lambda = \min_D \frac{1}{|T_0'|}$,
 for any set $N \subseteq D$ we therefore have
 \begin{equation}
  \mu(N) \leq \lambda^2 \int_N |T_0'|^2\, \rmd \mu
	\leq C\left(\lambda^2 \int_N |\nabla_b^\perp T|^2 \, \rmd \mu  + \lambda^2 \int_N |\nabla_b^\perp \rho|^2\, \rmd \mu  \right),
  \label{}
 \end{equation}
 where we used that $T_0' = |\nabla\psi|^{-2} \nabla T_0\cdot \nabla \psi$
 and that $|\nabla T_0| \leq |\nabla_b^\perp T| \leq |\nabla_b^\perp T|
 + |\nabla_b^\perp \rho|^2$.
  In particular, with $N = N(\ve)$,
	since $\|\nabla_b^\perp T\|_{L^2(N(\ve))}^2 \leq \ve^{-1}\|\nabla_b T\|_{L^2(N(\ve))}^2$,
 \begin{multline}
  \mu(N(\ve)) \leq C\lambda^2\left(  \ve^{-1} \int_{N(\ve)} |\nabla_b T|^2\, \rmd \mu  +
	 \int_{N(\ve)} |\nabla_b^\perp \rho|^2\, \rmd \mu  \right)\\
	\leq
	C\lambda^2 \left(
	\ve^{-1} \int_{N(\ve)} |\nabla_b T_0|^2\, \rmd \mu  +
	 \ve^{-1} \int_{D} |\nabla_b \rho|^2\, \rmd \mu  +
	 \int_{D} |\nabla_b^\perp \rho|^2\, \rmd \mu  \right)\\
	 \leq
	 C\lambda^2 \ve^{-1} \left(
 	 \int_{N(\ve)} |\nabla_b T_0|^2\, \rmd \mu  +
 	  \int_{D} |\nabla_b \rho|^2\, \rmd \mu  +
 	 \ve \int_{D} |\nabla_b^\perp \rho|^2\, \rmd \mu  \right).
  \label{boundforneps}
 \end{multline}
 Since $b = b_0 + \ve^{a} b_1$ and $\nabla_{b_0}T_0 = 0$, we have
 \begin{equation}
  \|\nabla_b T_0\|_{L^2(D)}^2 \leq \ve^{2a} \|\nabla_{b_1} T_0\|_{L^2(D)}^2
	\leq \ve^{2a} \|\pa_\theta \chi_1\|_{L^2(D)}^2\|T_0'\|_{L^\infty(D)}^2.
  \label{nablabt0}
 \end{equation}
 To handle the second and third terms in \eqref{boundforneps}, we
 use \eqref{3dnonintest}-\eqref{3dnonintest2} combined with \eqref{b0tob}. By \eqref{nablabt0}
 we therefore have
 \begin{equation}
  \mu(N(\ve))
	\leq
	C\lambda^2
	\left( \ve^{2a-1}\|\pa_\theta \chi_1\|_{L^2(D)}^2\|T_0'\|_{L^\infty(D)}^2 + \ve^{1/3}\|\Delta T_0\|_{H^{(0, \gamma)}(D)}^2\right).
\end{equation}

\appendix

\section{Geometric identities from the co-area formula}
\label{geosec}
In this section we collect some geometric formulas that we will use repeatedly.  In what follows, we fix $\psi:D\to \mathbb{R}$ such that $|\nabla \psi|\neq 0$ on $D$ and so that the level surfaces
 $S_\psi$ are codimension one manifolds which foliate $D$. Let $\psi_- =\inf_D \psi$ and $\psi_+= \sup_D \psi$.
We will use the co-area formula
\begin{equation}
 \int_{D} u\, \rmd \mu   = \int_{\psi_-}^{\psi_+}\int_{S_\psi} \frac{u}{|\nabla \psi|} \, \rmd \mathscr{H}^{(d-1)}{\rm d} \psi,
 \label{coarea}
\end{equation}
 see e.g. \cite{evansgariepy}.
We start with a simple result that generalizes Lemma E.2 from \cite{CDG1}.
\begin{lemma}
	\label{streamderiv}
	If $F \in H^2(D)$, we have
 \begin{equation}
  \frac{{\rm d}}{{\rm d} \psi} \int_{S_\psi} F \, \rmd \mathscr{H}^{(d-1)} =
	\int_{S_{\psi}} \div \left(\frac{\nabla \psi}{|\nabla \psi|} F\right)\,\rmd \mathscr{H}^{(d-1)}.
  \label{coareaderiv}
 \end{equation}
\end{lemma}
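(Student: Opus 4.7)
The approach I would take combines the divergence theorem on a thin slab between two level surfaces with the co-area formula, followed by a standard differentiation-under-the-integral argument.

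Fix $\psi\in(\psi_-,\psi_+)$ and, for small $h>0$, introduce the slab
$$D_{\psi,h}=\{x\in D:\psi\leq\psi(x)\leq \psi+h\}.$$
Its boundary is $S_{\psi+h}\cup S_\psi$, and the outer unit normal equals $n=\nabla\psi/|\nabla\psi|$ on $S_{\psi+h}$ and $-n$ on $S_\psi$. I would apply the divergence theorem to the vector field $V:=F\,\nabla\psi/|\nabla\psi|$ on $D_{\psi,h}$. Since $V\cdot n=F$ on each face, this gives
$$\int_{S_{\psi+h}}F\,\rmd\mathscr{H}^{(d-1)}-\int_{S_\psi}F\,\rmd\mathscr{H}^{(d-1)}=\int_{D_{\psi,h}}\div\!\left(\frac{\nabla\psi}{|\nabla\psi|}F\right)\rmd\mu.$$

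Next, I would recast the right-hand side with the co-area formula \eqref{coarea}, obtaining an iterated integral in $s$ over $(\psi,\psi+h)$ and on the slices $S_s$. Dividing the identity by $h$ and sending $h\to 0$, the left-hand side converges by definition to $\tfrac{\rmd}{\rmd\psi}\int_{S_\psi}F\,\rmd\mathscr{H}^{(d-1)}$, while the Lebesgue differentiation theorem applied to the $s$-integral on the right yields the corresponding surface integral on $S_\psi$, producing the identity \eqref{coareaderiv}.

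The main technical obstacle is justifying the passage to the limit — specifically, showing that the slicewise integrand appearing after applying the co-area formula is continuous (or at least a Lebesgue point) in $s$ at the chosen $\psi$. This uses two ingredients: (i) the non-degeneracy $|\nabla\psi|\neq 0$ on $\overline{D}$, which makes the family $\{S_s\}$ a smooth foliation so that change-of-variables and trace estimates behave uniformly in $s$; and (ii) the $H^2$ regularity of $F$, which ensures $\div(F\nabla\psi/|\nabla\psi|)\in L^2(D)$ and, via the trace theorem applied in tubular coordinates adapted to $\psi$, controls its restriction to each leaf $S_s$. Once the continuity of the slicewise integrand is established, the fundamental theorem of calculus applies directly and the lemma follows; the analogue for \eqref{coareaderiv2} then reduces to the same computation with $F$ replaced by a suitable product.
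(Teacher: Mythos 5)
Your proof is correct and follows essentially the same route as the paper: apply the divergence theorem to $F\,\nabla\psi/|\nabla\psi|$ on the slab between two nearby level sets, divide by the increment, and pass to the limit. The paper's argument is identical in substance (it just writes the slab as $D_{\psi_1,\psi_2}$ and is terser about justifying the limit), so your extra care with the co-area formula and Lebesgue differentiation is a welcome but inessential elaboration.
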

\begin{proof}
	We start with the observation that
	\begin{equation}
	 \int_{S_{\psi_2}} F \, \rmd \mathscr{H}^{(d-1)} - \int_{S_{\psi_1}} F\, \rmd \mathscr{H}^{(d-1)}
	 = \int_{D_{\psi_1, \psi_2}} \div\left( \frac{\nabla \psi}{|\nabla \psi|} F\right)\, \rmd \mu  ,
	 \label{bdyclaim}
	\end{equation}
	where $D_{\psi_1, \psi_2} = \cup_{\psi_1 \leq \psi' \leq \psi_2} S_{\psi'}$
	denotes the region bounded by the surfaces $S_{\psi_1}, S_{\psi_2}$. Indeed,
	by the divergence theorem,
	\begin{equation}
	 \int_{D_{\psi_1, \psi_2}} \div\left( \frac{\nabla \psi}{|\nabla \psi|} F\right)\, {\rm d}\mu
	 = \int_{S_{\psi_2}} n^{S_{\psi_2}}\cdot \frac{\nabla \psi}{|\nabla \psi|} F \, \rmd \mathscr{H}^{(d-1)}
	 + \int_{S_{\psi_1}} n^{S_{\psi_1}}\cdot \frac{\nabla \psi}{|\nabla \psi|} F \, \rmd \mathscr{H}^{(d-1)},
	 \label{bdyclaimdiv}
	\end{equation}
	where $n^{S_{\psi}}$ denotes the outward-pointing unit normal
	to $S_\psi$. Then
	$n^{S_{\psi_2}} = \frac{\nabla\psi}{|\nabla \psi|}\big|_{S_{\psi_2}}$
	and
	$n^{S_{\psi_1}} = -\frac{\nabla\psi}{|\nabla \psi|}\big|_{S_{\psi_1}}$, so
	\eqref{bdyclaimdiv} gives \eqref{bdyclaim}.
	Dividing \eqref{bdyclaim} by $\psi_2 - \psi_1$ and taking the limit
	gives \eqref{coareaderiv}.
\end{proof}
In particular, if $F = F(\psi)$ is constant on $S_\psi$,
writing
$\Delta F = \div(\nabla F) = \div (\nabla \psi F')$ we have
	\begin{equation}
	 \int_{S_\psi} \Delta F \rmd \mathscr{H}^{(d-1)}
	 = \frac{\rmd}{\rmd \psi } \left( \int_{S_{\psi}} |\nabla \psi| F' \rmd \mathscr{H}^{(d-1)}\right)
	 = \frac{\rmd}{\rmd \psi } \left( \left[\int_{S_\psi} |\nabla \psi| \rmd \mathscr{H}^{(d-1)}\right]
	 F'\right).
	 \label{coareaderiv2}
	\end{equation}

Another consequence of the formula \eqref{coarea} is the following
\begin{lemma}
\label{lemdiv} Let $X$ be a vector field defined
in $D$ with the property that $X|_{S_{\psi}}$ is tangent to
$S_\psi$.  Then the divergence $\div X$ in $D$ is related to the divergence
operator $\div_{S_\psi}$ on $S_\psi$ by
\begin{equation}
		\frac{\div X}{|\nabla \psi|} \Big|_{S_\psi}  = \div_{S_\psi}\left(\left[ \frac{ X}{|\nabla \psi|}\right]\Big|_{S_\psi}\right).
 \label{divrelation}
\end{equation}
In particular, if $X$ is divergence-free in $D$, then $\varrho:= \frac{1}{|\nabla \psi|}\big|_{S_\psi}$ is a density conserved by $X$ on $S_\psi$.
\end{lemma}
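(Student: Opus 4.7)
The plan is to establish \eqref{divrelation} weakly by pairing both sides against an arbitrary test function $\phi \in C^\infty_c(D)$, then to upgrade to the pointwise identity by continuity. The calculation uses the co-area formula \eqref{coarea} to slice the volume integral into an iterated integral over level sets, the intrinsic divergence theorem on each closed torus $S_\psi$, and the tangency hypothesis on $X$.

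First, I would integrate by parts in $D$ to write $\int_D \phi \div X \, \rmd \mu = -\int_D X \cdot \nabla \phi \, \rmd \mu$, and apply \eqref{coarea} to both sides to obtain
\begin{equation}
\int_{\psi_-}^{\psi_+}\! \int_{S_\psi} \frac{\phi \div X}{|\nabla \psi|}\,\rmd \mathscr{H}^{(d-1)} \rmd \psi = -\int_{\psi_-}^{\psi_+} \!\int_{S_\psi} \frac{X \cdot \nabla \phi}{|\nabla \psi|}\,\rmd \mathscr{H}^{(d-1)} \rmd \psi. \notag
\end{equation}
Next, because $X|_{S_\psi}$ is tangent to $S_\psi$, one has $X \cdot \nabla \phi = X \cdot \nabla^T \phi$ along $S_\psi$, where $\nabla^T \phi = \nabla \phi - |\nabla \psi|^{-2}(\nabla\phi \cdot \nabla \psi)\nabla \psi$ denotes the tangential gradient. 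Since each level set $S_\psi$ is a closed manifold (a $(d-1)$-torus, with no boundary), the intrinsic divergence theorem gives
\begin{equation}
\int_{S_\psi} \frac{X}{|\nabla \psi|} \cdot \nabla^T \phi \, \rmd \mathscr{H}^{(d-1)} = -\int_{S_\psi} \phi \,\div_{S_\psi}\!\left(\frac{X}{|\nabla \psi|}\bigg|_{S_\psi}\right) \rmd \mathscr{H}^{(d-1)}. \notag
\end{equation}
Combining the two displays produces
\begin{equation}
\int_{\psi_-}^{\psi_+} \!\int_{S_\psi} \phi \left[ \frac{\div X}{|\nabla \psi|} - \div_{S_\psi}\!\left(\frac{X}{|\nabla \psi|}\bigg|_{S_\psi}\right) \right] \rmd \mathscr{H}^{(d-1)} \rmd \psi = 0 \notag
\end{equation}
for every $\phi \in C^\infty_c(D)$. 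Since the bracketed expression is continuous in $D$ (under the implicit smoothness hypotheses on $X$ and $\psi$), density of test functions forces it to vanish identically, which is exactly \eqref{divrelation}.

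For the final assertion, observe that if $\div X = 0$ in $D$, then \eqref{divrelation} reduces to $\div_{S_\psi}(X/|\nabla \psi||_{S_\psi}) = 0$, which is precisely the continuity equation certifying that $\varrho = 1/|\nabla \psi||_{S_\psi}$ is an invariant density for the tangent flow of $X$ on $S_\psi$. The main subtlety is the use of the surface divergence theorem without boundary terms, which rests on the assumption that the level sets $S_\psi$ are closed; this is built into the fibered setting of the paper. Otherwise the argument is essentially bookkeeping with the co-area formula and integration by parts at two different scales.
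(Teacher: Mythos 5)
Your proposal is correct and follows essentially the same route as the paper's proof: pair against a test function, integrate by parts in $D$, slice with the co-area formula, replace $X\cdot\nabla$ by $X\cdot\nabla^T$ using tangency, and apply the boundaryless surface divergence theorem on each closed torus $S_\psi$ before concluding by arbitrariness of the test function. The only cosmetic difference is that you take compactly supported test functions, whereas the paper uses arbitrary smooth $u$ (permissible since $X$ is tangent to the level sets forming $\partial D$, so the bulk integration by parts produces no boundary term either way).
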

\begin{proof}
This can be seen by working in local coordinates but it is simpler to use \eqref{coarea}
and note that if $u \in C^\infty(D)$ is any test function then
\begin{align*}
 -\int_D \div X u\, \rmd \mu   &= \int_D (X\cdot \nabla) u\, \rmd \mu
 = \int_{\psi_-}^{\psi_+} \int_{S_\psi} (X\cdot \nabla) u\,\frac{ \rmd \mathscr{H}^{(d-1)}}{|\nabla \psi|}\\
 &= \int_{\psi_-}^{\psi_+} \int_{S_\psi} (X\cdot \nabla^T) u \frac{ \rmd \mathscr{H}^{(d-1)}}{|\nabla \psi|}
 = -\int_{\psi_-}^{\psi_+} \int_{S_\psi} \div_{S_\psi}\left(\frac{ X}{|\nabla \psi|}
 \right) u \, \rmd \mathscr{H}^{(d-1)},
 \label{}
\end{align*}
where we used that $X\cdot \nabla = X\cdot \nabla^T$ on $S_\psi$,
where $\nabla^T$ denotes the tangential gradient on $S_\psi$,
$\nabla^T U = (\nabla - \frac{\nabla \psi}{|\nabla \psi|^2} \nabla \psi\cdot \nabla)u$
whenever $u$ is an extension of $U$ from $S_\psi$ to a neighborhood of $S_\psi$.
By \eqref{coarea}, the left-hand side is
$-\int_{\psi_-}^{\psi_+} \int_{S_\psi} \frac{\div X}{|\nabla \psi|} u\, \rmd \mathscr{H}^{(d-1)}$. Then \eqref{divrelation} follows since $u$ is arbitrary.
\end{proof}

\subsection*{Acknowledgements.} We thank P. Constantin, P. Helander, S. Hudson, E. Paul, and P. Torres de Lizaur for numerous insightful discussions.
 The research of TD was partially supported by NSF-DMS grant 2106233. The work of DG was partially supported
 by the Simons Center for Hidden Symmetries and Fusion Energy
award \# 601960. The studies of HG are supported by Ford Foundation and
the NSF Graduate Research Fellowship under grant DGE-2039656.

\bibliographystyle{abbrv}

\end{document}